\documentclass[11pt, reqno]{amsart}
\usepackage[utf8]{inputenc}
\usepackage{amsmath}
\usepackage{amsthm}
\usepackage{amsfonts}
\usepackage{amssymb}
\usepackage{mathrsfs, mathtools, mathdots}
\usepackage{diagbox}
\usepackage{cancel}

\newtheorem{theorem}{Theorem}[section]
\newtheorem{lemma}{Lemma}[section]
\newtheorem{proposition}{Proposition}[section]
\newtheorem{corollary}{Corollary}[section]

\newtheorem{remark}{Remark}
\usepackage{hyperref}
\usepackage[top=30mm,bottom=30mm,left=30mm,right=30mm]{geometry}
\newcommand{\KK}[1]{\mathcal{O}_{#1}}

\author{Saunak Bhattacharjee}
\address{School of Science, University of New South Wales, Canberra, ACT, Australia.}
\email{saunak.bhattacharjee@unsw.edu.au}
\title[Larger sieve with height function and uniform bounds for integral points]{Larger sieve with height function and uniform bounds for integral points on curves over number fields}
\begin{document}
\maketitle
\begin{abstract}
    Let $A \subseteq \KK{K}$ be a set of algebraic integers of height up to $H$ such that $|A \mod{\mathfrak{p}}|\leq \alpha |\KK{K}/\mathfrak{p}|$ for every prime ideal $\mathfrak{p}$ with $N\mathfrak{p}>c$ for some $\alpha \in (0,1)$. It follows from a larger sieve due to Ellenberg, Elsholtz, Hall and Kowalski that $|A| \ll_{K,c,\alpha}H^{2\alpha}$. In this paper, we improve on this larger sieve bound by showing that $|A|\ll_{K,c,\alpha}H^{\alpha}(\log H)^r$. We also obtain a two-dimensional larger sieve of  Helfgott and Venkatesh type over $\KK{K} \times \KK{K}$ and apply it to produce a Bombieri-Pila type bound over $\KK{K}$.

\end{abstract}
\author{}

\date{}

\maketitle

\section{Introduction}

Let $A \subseteq [0,N]$ be a set of natural numbers. Suppose that for every prime $p>c$,
\[
|A \bmod p| \le \alpha p
\]
for some $\alpha \in (0,1)$. In other words, the set $A$ is ill-distributed modulo $p$. Gallagher's larger sieve \cite{Gallagher} shows that occupying only a small proportion of residue classes modulo $p$ forces the set to be small. In particular,
\begin{equation}\label{0}
    |A|\ll_{c,\alpha}N^{\alpha}.
\end{equation}

\medskip

\noindent
In this paper we study a number field analogue of this problem. Let $K$ be a number field and let $\KK{K}$ denote its ring of integers. Consider the set
\[
A=\{a\in \KK{K}: H(a)\leq H\},
\]
where $H(a)$ denotes the multiplicative height of $a$, defined by 
\begin{equation}\label{HD}
    H(a)=\prod_{v\in M_K}\max\bigl(1,|a|_v^{n_v}\bigr),
\end{equation}
where $M_K$ denotes the set of places of $K$ \cite[p. 206]{Silverman}. See (\ref{hdef}) and Section \ref{Backgrond} for a detailed discussion of the height function.

\medskip

\noindent
Suppose that
\[
|A \bmod \mathfrak{p}| \le \alpha |\KK{K}/\mathfrak{p}|
\]
for every prime ideal $\mathfrak{p}$ with $N\mathfrak{p}>c$ and some $\alpha \in (0,1)$. How small can $|A|$ be?

\medskip
\noindent
Ellenberg, Elsholtz, Hall and Kowalski \cite{Ellenberg} and Zywina \cite{Zywina} introduced a larger sieve over an arbitrary number field $K$ with the same height function (\ref{HD}) as above. From their larger sieve \cite[Proposition 17]{Ellenberg}, it follows that
\begin{equation}\label{1}
    |A| \ll_{K,c,\alpha}H^{2\alpha}.
\end{equation}

\noindent
Note that the exponent of $H$ is twice that in (\ref{0}), whereas the corresponding exponent for rational integers is only $\alpha$. This extra $2$-factor appears in the exponent because of the multiplicative nature of the height function, i.e. in \cite{Ellenberg} the authors use the following well-known property of the height function which contributes.
\begin{equation}\label{2}
    H(\alpha+\beta)\leq 2^{[K:\mathbb{Q}]}H(\alpha)H(\beta), \text{\,\,\,\,\,for every\,\,} \alpha, \beta \in K.
\end{equation}

\noindent
If we fix $K=\mathbb{Q}$, then for every rational integer $\alpha, \beta \in \mathbb{Z} $ we have
$$H(\alpha+\beta)\leq H(\alpha)+H(\beta).$$
Consequently, we get a bound similar to the larger sieve bound for natural numbers (\ref{0})  $$|A|\ll_{c,\alpha} H^{\alpha}.$$ 
\noindent
It is therefore natural to ask whether the bound in (\ref{1}) is sharp. 
In  \cite[Section 6]{Shao}, Shao raises this question about the sharpness of the larger sieve bound in the setting of Ellenberg, Elsholtz, Hall and Kowalski \cite{Ellenberg}. Moreover, in  \cite[Section 4]{Shao}, he shows that the inverse sieve conjecture implies improved bound on the larger sieve. Thus, understanding the sharpness of the larger sieve over number fields is relevant to formulating an improved larger sieve conjecture over number fields.

\medskip
\noindent
In Section~\ref{Larger Sieve}, we prove the following version of the larger sieve over the ring of integers $\mathcal{O}_K$ to address the question about the sharpness of the larger sieve bound over number fields.
\begin{theorem}\label{L.S}
Let $K/\mathbb{Q}$ be a number field, let $H > 0$ be a constant, and let $A$ be a finite set of elements of $\KK{K}$ such that 
\[
    H(a) \leq H \quad \text{for all } a \in A,
\]
where $H$ denotes the multiplicative height (\ref{HD}) on $K$.\\  
\noindent
Let $P$ be a finite set of prime ideals in the ring of integers $\KK{K}$.  
If  $$|A \bmod{\mathfrak{p}}|\leq \nu(\mathfrak{p})$$ for all $\mathfrak{p} \in P$, then 
\[
    |A| \ll_K 
    \Bigg(\frac{
        \displaystyle \sum_{\mathfrak{p} \in P} \log N\mathfrak{p} 
        - \log\!\big(2C_K H \big)
    }{
        \displaystyle \sum_{\mathfrak{p} \in P} 
        \frac{\log N\mathfrak{p}}{\nu(\mathfrak{p})}
        - \log\!\big( 2C_K H \big)
    }\Bigg)(\log H)^r,
\]
where $C_K$ is a constant only depending on the number field $K$, $r$ is the rank of the unit group $\KK{K}^*$ and provided that the denominator in this expression is positive.
\end{theorem}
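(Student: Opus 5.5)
The plan is to reduce to Gallagher's classical argument applied to many small pieces of $A$ and then count the pieces. The key point is that the factor $H^2$ in (\ref{1}) comes from bounding $N(a-b)$ via (\ref{2}). We avoid it by first partitioning $A$ into boxes in the Minkowski embedding. Inside one box, all pairwise differences have norm $\ll_K H$ rather than $H^2$.

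\textbf{Step 1 (unit adjustment and boxes).} For $a\in A$ nonzero, $|N(a)|\le H(a)\le H$. By a standard fundamental-domain argument (Dirichlet's unit theorem), every $a$ can be written as $a=u\,b$ with $u\in\KK{K}^*$ and $b$ satisfying $|b|_\sigma\asymp_K |N(b)|^{1/d}$ for all embeddings $\sigma$, where $d=[K:\mathbb{Q}]$.

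The height bound also controls the archimedean sizes. Since $H(a)\le H$, each $\log|a|_\sigma$ lies in $[-\log H,\log H]$ up to $O_K(1)$. Hence the unit $u$ has log-embedding vector in a box of side $O(\log H)$ in $\mathbb{R}^{r}$.

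Cover that box by $O_K((\log H)^r)$ unit cubes. This partitions $A$ into $O_K((\log H)^r)$ classes $A_j$ (plus possibly $\{0\}$). On a fixed class, the vectors $(\log|a|_\sigma)_\sigma$ differ from a fixed vector $(\lambda_\sigma)$ with $\sum_\sigma n_\sigma\lambda_\sigma\le\log H$ by $O_K(1)$. Therefore for $a,b\in A_j$,
\[
|N(a-b)|=\prod_\sigma|a-b|_\sigma\le\prod_\sigma 2\max(|a|_\sigma,|b|_\sigma)\le C_K H,
\]
which is the source of $\log(2C_KH)$ in the statement.

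\textbf{Step 2 (Gallagher on each class).} Fix $A_j$ and double count $\Sigma:=\sum_{\mathfrak p\in P}\log N\mathfrak p\sum_{a\ne b\in A_j,\ a\equiv b \bmod \mathfrak p}1$.

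For the upper bound, for $a\neq b$ the ideals $\mathfrak p\in P$ dividing $a-b$ satisfy $\sum\log N\mathfrak p\le\log|N(a-b)|\le\log(C_KH)$. Hence $\Sigma\le |A_j|(|A_j|-1)\log(2C_KH)$.

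For the lower bound, by Cauchy--Schwarz over residue classes, $\sum_{x}|A_j\cap(x+\mathfrak p)|^2\ge |A_j|^2/\nu(\mathfrak p)$. Hence
\[
\Sigma\ge\sum_{\mathfrak p}\log N\mathfrak p\bigl(|A_j|^2/\nu(\mathfrak p)-|A_j|\bigr).
\]
Here we use that $A_j\bmod\mathfrak p\subseteq A\bmod\mathfrak p$ has at most $\nu(\mathfrak p)$ elements. Comparing the two bounds and dividing by $|A_j|$ gives
\[
|A_j|\le\frac{\sum\log N\mathfrak p-\log(2C_KH)}{\sum\log N\mathfrak p/\nu(\mathfrak p)-\log(2C_KH)},
\]
provided the denominator is positive.

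\textbf{Step 3.} Sum over the $O_K((\log H)^r)$ classes to obtain the theorem.

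The main obstacle is Step 1. We must make precise that the height bound confines each $a$, after unit normalization, to $O((\log H)^r)$ regions, each of "multiplicative width" $O_K(1)$, and do so uniformly for small-norm elements. The elements $a$ with $|a|_\sigma$ tiny for some $\sigma$ are the delicate ones. I would handle them by noting that $\max(1,|a|_\sigma)$ is what enters the height, and by bounding each coordinate $\log\max(|a|_\sigma,1)$ rather than $\log|a|_\sigma$. In that formulation, $|a-b|_\sigma\le2\max(1,|a|_\sigma,|b|_\sigma)$, and the cubes are taken in the truncated log-coordinates $\log\max(1,|a|_\sigma)\in[0,\log H]$. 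Their sum is at most $\log H$, giving $O((\log H)^{r})$ cubes, since there are $r+1$ coordinates constrained by one linear bound. I expect this reformulation to bypass units entirely, with the product over $\sigma$ of the cube centers giving $\ll_K H$.
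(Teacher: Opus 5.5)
Your main line (Steps 1--3) is essentially the paper's argument. You normalise each $a$ by a unit into a balanced region (the set $D$ of (\ref{Domain})), sort $A$ into $O_K((\log H)^r)$ classes by that unit, show differences inside a class are $\ll_K H$, and run Gallagher's Cauchy--Schwarz count per class. Summing over classes instead of taking the largest one changes nothing. Bounding $|N(a-b)|$ rather than $H(a-b)$ is fine and slightly cleaner, since for a nonzero algebraic integer $x$ one has exactly $\prod_{\mathfrak p}N\mathfrak p^{v_{\mathfrak p}(x)}=|N(x)|\le H(x)$. Covering a box of side $O(\log H)$ in the log-unit hyperplane gives the same count the paper gets from $H(u')\le 2^ne^{nrM}H^2$ and Lemma \ref{u3}.

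Two things need fixing. First, in Step 1 the claim that on a fixed class $(\log|a|_\sigma)_\sigma$ stays within $O_K(1)$ of a fixed vector is false. With $a=ub$ you have $\log|a|_\sigma=\log|u|_\sigma+\tfrac1d\log|N(a)|+O_K(1)$, and $\tfrac1d\log|N(a)|$ ranges over $[0,\tfrac1d\log H]$ inside one class; for instance $a$ and $2^ka$ receive the same unit. Only a one-sided bound holds: $\log|a|_\sigma\le\lambda_\sigma+O_K(1)$ with $\lambda_\sigma=\mu_\sigma+\tfrac1d\log H$, where $\mu$ is the log-vector of any unit of the class, so $\sum_\sigma n_\sigma\lambda_\sigma=\log H$. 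That is all your displayed estimate uses, so $|N(a-b)|\ll_K H$ survives (equivalently, order $a,b$ by norm as in Lemma \ref{u1}). Nothing is delicate about small $|a|_\sigma$ in this route: $|N(a)|\ge1$ gives $|b|_\sigma\gg_K1$ and $\log|a|_\sigma\ge-\log H$. So the unit-based Step 1 is already complete.

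Second, the unit-free reformulation you propose for the ``main obstacle'' miscounts. The region $\{t\in\mathbb{R}_{\ge0}^{r+1}:\sum_\sigma n_\sigma t_\sigma\le\log H\}$ is $(r+1)$-dimensional; an inequality does not cut the dimension. Truncated log-vectors of integers of height $\le H$ really do fill it, e.g. $k\varepsilon^j$ with $k\in\mathbb{Z}$ in a real quadratic field. So unit cubes give $\asymp(\log H)^{r+1}$ classes, and you would lose a logarithm.

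It is repairable with down-closed boxes $\prod_\sigma[0,c_\sigma]$, where $c\in\mathbb{Z}_{\ge0}^{r+1}$ and $\log H\le\sum_\sigma n_\sigma c_\sigma<\log H+d$:
\begin{itemize}
\item every admissible $t$ is dominated coordinatewise by such a $c$ (raise one coordinate up to the face $\sum_\sigma n_\sigma t_\sigma=\log H$, then round up);
\item there are only $O_K((\log H+1)^r)$ such $c$;
\item for $a,b$ in one box, $|N(a-b)|\le\prod_\sigma(2e^{c_\sigma})^{n_\sigma}<(2e)^dH$.
\end{itemize}
That version would be a genuinely more elementary proof than the paper's, needing neither Dirichlet's unit theorem nor Lemma \ref{u3}.
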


\begin{remark}
    In particular, from the proof of Theorem~\ref{L.S} it follows that the constant $C_K$, appearing inside the logarithm $\log\!\big(2C_K H \big)$ in the statement of Theorem~\ref{L.S} depends on the degree $[K:\mathbb{Q}]$, the rank $r$ of the unit group, and the quantity $\max_{1\le i\le n,\,1\le j\le r}\log|\sigma_i(\epsilon_j)|$, where $\epsilon_1,\dots,\epsilon_r$ are fundamental units of $\KK{K}^*$ and $\sigma_i:K\to\mathbb{C}$ are the embeddings.
\end{remark}

\noindent
As an immediate consequence of Theorem \ref{L.S}, we obtain the following corollary, which improves upon the bound (\ref{1}) obtained using the larger sieve of Ellenberg, Elsholtz, Hall, and Kowalski.
\begin{corollary}\label{cor}
    Let $A=\{a\in \KK{K}:H(a)\leq H\}.$ If for every prime ideal $\mathfrak{p}
$ with $N\mathfrak{p}>c$, $$|A \bmod\mathfrak{p}|\leq \alpha |\KK{k}/\mathfrak{p}|, \text{\,\,\,\,\,for some } \alpha \in (0,1).$$  Then 
$$|A|\ll_{K,c,\alpha}H^{\alpha}(\log H)^r.$$
\end{corollary}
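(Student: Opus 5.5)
The plan is to apply Theorem~\ref{L.S} with $P$ taken to be all prime ideals of norm in a suitable range, with $\nu(\mathfrak{p})=\alpha N\mathfrak{p}$, and to choose the cutoff $Q$ so that the denominator is bounded below by a positive constant while the numerator is $O(H^{\alpha})$. Concretely, let
\[
P=\{\mathfrak{p}\subseteq \KK{K} \text{ prime}: c<N\mathfrak{p}\le Q\}.
\]
By hypothesis $|A \bmod \mathfrak{p}|\le \alpha N\mathfrak{p}=\nu(\mathfrak{p})$ for every $\mathfrak{p}\in P$.

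\medskip
\noindent
For the numerator I will use the prime ideal theorem in the Chebyshev form $\sum_{N\mathfrak{p}\le Q}\log N\mathfrak{p}\ll_K Q$. Since the subtracted term $\log(2C_KH)$ is nonnegative for $H\ge 1$ (after enlarging $C_K$ if necessary), the numerator is $\ll_K Q$.

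For the denominator I will use the number field analogue of Mertens' theorem,
\[
\sum_{N\mathfrak{p}\le Q}\frac{\log N\mathfrak{p}}{N\mathfrak{p}}=\log Q+O_K(1).
\]
This holds with the $O_K(1)$ error uniformly, since the prime ideals of degree $\ge 2$ contribute a convergent sum. The primes with $N\mathfrak{p}\le c$ contribute only $O_{K,c}(1)$. Hence
\[
\sum_{\mathfrak{p}\in P}\frac{\log N\mathfrak{p}}{\alpha N\mathfrak{p}}-\log(2C_KH)=\frac{1}{\alpha}\log Q-\log H-B,
\]
where $B=O_{K,c,\alpha}(1)$ collects $\log(2C_K)$ and the Mertens and small-prime errors.

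\medskip
\noindent
Now choose $Q=MH^{\alpha}$, where $M$ is a constant depending only on $K,c,\alpha$. Then the denominator equals $\frac{1}{\alpha}\log M-B$. Taking $M=\exp(\alpha(B+1))$ makes it at least $1$; in particular it is positive, as Theorem~\ref{L.S} requires. Theorem~\ref{L.S} then gives
\[
|A|\ll_K \frac{Q}{1}\,(\log H)^r\ll_{K,c,\alpha}H^{\alpha}(\log H)^r.
\]
For bounded $H$ (say $H\le 2$) the statement is trivial, since $A$ is then a finite set whose size depends only on $K$ by Northcott's theorem. So I may assume $H$ is large, which makes $\log H\ge 1$ and $Q>c$.

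\medskip
\noindent
The only point needing care is the Mertens-type estimate for prime ideals with an explicit $O_K(1)$ error, because this is what fixes $M$ and hence the implied constant. It follows from the prime ideal theorem by partial summation, or directly from the simple pole of $\zeta_K$ at $s=1$ (via $-\zeta_K'/\zeta_K$). I take both this estimate and the Chebyshev bound as standard.
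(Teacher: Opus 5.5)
Your proof is correct. It uses the same basic strategy as the paper: apply Theorem~\ref{L.S} with $\nu(\mathfrak{p})=\alpha N\mathfrak{p}$ and all prime ideals up to a cutoff. The difference is in how the cutoff is chosen.

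The paper takes $P=\{\mathfrak{p}: N\mathfrak{p}\le |A|/(\log H)^r\}$, so the cutoff is tied to the size of the set, mirroring the Gallagher sketch in the introduction. It then simply asserts $|A|/(\log H)^r\ll H^{\alpha}$. Two things are left implicit there:
\begin{itemize}
\item Why the denominator is positive. This needs a case split: if $Q:=|A|/(\log H)^r\ge MH^{\alpha}$, the denominator is $\frac{1}{\alpha}\log(Q/H^{\alpha})-O(1)>0$, and Theorem~\ref{L.S} then forces $\log(Q/H^{\alpha})=O(1)$.
\item That primes with $N\mathfrak{p}\le c$ must be excluded from $P$, since the hypothesis is not available for them.
\end{itemize}

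Your choice $Q=MH^{\alpha}$ is independent of $|A|$, which is the standard choice for Gallagher's sieve. It makes the denominator explicitly at least $1$ and the numerator $\ll_K Q$ in one step, so no self-referential case analysis is needed. You also remove the small primes explicitly.

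Two minor points:
\begin{itemize}
\item Your $B$ depends on $Q$, since the Mertens error term does. Define $M$ from a uniform bound $B_0\ge |B|$ rather than from $B$ itself, to avoid circularity.
\item For $H$ close to $1$ the factor $(\log H)^r$ tends to $0$ while $|A|\ge 1$, because $0\in A$. So the statement, in the paper as well, should be read for $H\ge H_0$; the case $H\le 2$ is not literally trivial.
\end{itemize}
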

\noindent
This power saving is achieved by improving on the trivial height inequality (\ref{2}). More precisely, we prove an additive version (Proposition \ref{I1}) of the height inequality for a large subset of algebraic integers up to a given height.

\medskip
\noindent
Further, in Section $\ref{2-D Larger}$, we study the analogous problem of two-dimensional ill-distributed sets of algebraic integers. In \cite{H-V}, Helfgott and Venkatesh introduced a two-dimensional larger sieve \cite[Proposition $3.1$]{H-V} to study two-dimensional ill-distributed sets of rational integers and resolved the two-dimensional inverse sieve conjecture (see \cite{Walsh}, \cite{Shao}, \cite{Green-Harper} for a detailed discussion on the inverse sieve conjectures). Moreover, they recovered the Bombieri-Pila  bound \cite{B-P} using \cite[Proposition $3.1$]{H-V}.

\medskip
\noindent
In \cite{Walsh}, Walsh settled the higher dimensional inverse sieve conjectures. Recently, in \cite{Sasyk IS}, Menconi, Paredes and Sasyk extended and generalized the results of Walsh \cite{Walsh} to global fields.
Note that, in \cite[Section 3.2]{Sasyk IS}, the authors use the trivial height inequality (\ref{2}) to obtain analogues of Lemma 3.1 and Lemma 3.2 of \cite{Walsh}. Although the asymptotic bounds in the main results of \cite{Sasyk IS} remain unaffected by using the improved height inequality (Proposition \ref{I1}), one can indeed get improved constants $C_1$ and $C_2$ in Lemma 3.1 and 3.2 of \cite{Sasyk IS} respectively.

\medskip
\noindent
In a separate paper \cite{Sasyk BP} Paredes and Sasyk produced a Bombieri-Pila type bound over global fields generalizing the methods of Heath-Brown \cite{Heath-Brown} as well as extending the works of Salberger \cite{Salberger}, Walsh \cite{Walsh 2}, and Castryck, Cluckers, Dittmann and Nguyen \cite{others}. In this paper, we study the same problems over number fields but we do so by extending the methods of Helfgott and Venkatesh to number fields. 

\medskip
\noindent
It turns out that when one tries to generalize the two-dimensional larger sieve of Helfgott and Venkatesh to number fields and then to produce the Bombieri-Pila bound, a similar obstacle arises due to the multiplicative nature of the height function. 
In Section \ref{2-D Larger}, we resolve this issue by proving an improved height inequality over determinants (Proposition \ref{H2}) and using this we prove a two dimensional larger sieve (Theorem \ref{2-D Larger Sieve}) of Helfgott and Venkatesh type over number fields. Further in Section \ref{Bombieri-Pila}, we apply it (Theorem \ref{2-D Larger Sieve}) to reproduce a Bombieri-Pila type bound (Theorem \ref{B-P}) over number fields .

\medskip
\noindent
In particular, let $\mathcal{W}$ be a finite set of monomials $x^ly^m$ with $0\leq l\leq L$ and $0\leq m\leq M$, where $L$ and $M$ are given. Suppose,
\[|\mathcal{W}|=w, \,\,\,\,\,
d_{\mathcal{W}} = \sum_{f \in \mathcal{W}} \deg(f).
\]
Write $f_1,f_2,...,f_w$ for the elements of $\mathcal{W}$ and by a $\mathcal{W}$-curve we mean an affine algebraic curve described by a single equation $g(x,y)=0$, where $g$ belongs to the linear span of $\mathcal{W}$ over $K$. 

\medskip
\noindent
We prove  the following version of the two-dimensional larger sieve of Helfgott and Venkatesh type over $\KK{K}$.

\begin{theorem}\label{2-D Larger Sieve}
    Let $S \subseteq \{(\alpha,\beta )\in \KK{K}\times \KK{K}\ : H(\alpha), H(\beta)\leq H\}$.  
Suppose that for some fixed $\tau>0$ and some constant $c>0$ the number of residue classes
\[
\{(\alpha,\beta) \bmod \mathfrak{p} : (\alpha,\beta)\in S\}
\]
is at most $\tau |\KK{K}/\mathfrak{p}|$ for every prime ideal $\mathfrak{p}$ with $N\mathfrak{p}>c$.  

\noindent
Then for any $\delta\in(0,1)$ one of the following holds:
\begin{enumerate}
  \item[(a)] there is a $\mathcal{W}$-curve and $S'\subseteq S$ with $|S'|\gg_K\frac{|S|}{(\log H)^{2r}}$ such that the $\mathcal{W}$-curve contains at least $\delta|S'|$ points of $S'$. 
  
  \item[(b)] 
  $$|S| \ll_{K,c,\delta,\mathcal{W}} H^{ \frac{2\tau\, d_{\mathcal{W}}}{w(w-1)} + O_{\tau,\mathcal{W}}(\delta)}.$$
  
  \end{enumerate}
\end{theorem}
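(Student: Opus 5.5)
We follow the strategy of Helfgott and Venkatesh \cite[Proposition 3.1]{H-V}, with the trivial height inequality (\ref{2}) replaced by the improved determinant inequality (Proposition \ref{H2}).

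\textbf{Step 1: localize to a box.} As in the proof of Theorem \ref{L.S}, write each $\alpha\in\KK{K}$ of height at most $H$ as $\alpha=\varepsilon\alpha_0$. Here $\varepsilon$ is a unit and $\alpha_0$ is a reduced representative whose archimedean absolute values are all $\ll_K H^{1/n}$ up to a bounded factor. Partition the unit part according to the $O((\log H)^r)$ possible logarithmic embedding boxes. Doing this for both coordinates splits $S$ into $O_K((\log H)^{2r})$ classes. By pigeonhole, one class $S'$ has $|S'|\gg_K |S|/(\log H)^{2r}$. All points in $S'$ have comparable archimedean sizes, so Proposition \ref{H2} applies to them. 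It bounds $H(\det(f_i(P_j))_{i,j\le w})\ll_K H^{d_{\mathcal{W}}}$ for $P_1,\dots,P_w\in S'$, with no extra $2^w$-type multiplicative loss. This explains the factor $(\log H)^{2r}$ in alternative (a).

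\textbf{Step 2: set up the dichotomy.} Suppose (a) fails for this $S'$: every $\mathcal{W}$-curve contains fewer than $\delta|S'|$ points of $S'$. We run the Helfgott--Venkatesh counting on $w$-tuples. Let $T$ be the set of tuples $(P_1,\dots,P_w)\in S'^w$ with $D(P_1,\dots,P_w):=\det(f_i(P_j))\neq 0$. The matrix is singular exactly when the points lie on a common $\mathcal{W}$-curve. Choosing the points one at a time and using the failure of (a) then gives $|T|\ge (1-O_w(\delta))|S'|^w$. For each tuple in $T$, $D$ is a nonzero algebraic integer of height $\ll H^{d_{\mathcal{W}}}$. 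Hence $\prod_{\mathfrak{p}\mid D}N\mathfrak{p}\le |N_{K/\mathbb{Q}}D|\ll H^{d_{\mathcal{W}}}$, so $\sum_{\mathfrak{p}\mid D}\log N\mathfrak{p}\le d_{\mathcal{W}}\log H+O_K(1)$.

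\textbf{Step 3: double count the pairs $(\text{tuple},\mathfrak{p})$ with $\mathfrak{p}\mid D$.} Sum over prime ideals $\mathfrak{p}$ with $c<N\mathfrak{p}\le Q$. If $P_1,\dots,P_w$ are such that the reduced points $\bar P_j$ lie on a common $\mathcal{W}$-curve over $\KK{K}/\mathfrak{p}$, then $\mathfrak{p}\mid D$. A lower bound for the number of such tuples comes from:
\begin{itemize}
\item Cauchy--Schwarz/Hölder on the fibres of $S'\to S'\bmod\mathfrak{p}$, using that there are at most $\tau N\mathfrak{p}$ residue classes;
\item the fact that over $\mathbb{F}_{N\mathfrak{p}}$, any $w-1$ points lie on a $\mathcal{W}$-curve.
\end{itemize}
Following \cite{H-V}, this yields about $|S'|^w(1-O(\delta))\frac{w(w-1)}{2\tau}\cdot\frac{1}{N\mathfrak{p}}\cdot(\text{correction})$ tuples divisible by $\mathfrak{p}$. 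Weight by $\log N\mathfrak{p}$ and sum, using the prime ideal theorem $\sum_{N\mathfrak{p}\le Q}\frac{\log N\mathfrak{p}}{N\mathfrak{p}}=\log Q+O_K(1)$. Comparing with the upper bound $|T|(d_{\mathcal{W}}\log H+O(1))$ from Step 2 gives the key relation: $Q$ cannot exceed roughly $H^{2\tau d_{\mathcal{W}}/(w(w-1))+O(\delta)}$ while $|S'|$ exceeds a suitable power of $Q$. Taking $Q$ to be a power of $|S'|$ then gives $|S'|\ll H^{2\tau d_{\mathcal{W}}/(w(w-1))+O(\delta)}$. Multiplying by the $(\log H)^{2r}$ classes changes the exponent only by $o(1)$, which is absorbed into $O(\delta)$ for a fixed $\delta$ and large $H$. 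This gives (b).

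\textbf{Main obstacle.} The hardest step is the local counting in Step 3. We need a sharp lower bound for the number of $w$-tuples whose reductions mod $\mathfrak{p}$ are $\mathcal{W}$-dependent, with the constant $w(w-1)/(2\tau)$ and not something weaker. In \cite{H-V} this uses an inductive rank argument on the evaluation matrix over the finite field. We would transplant it verbatim to $\KK{K}/\mathfrak{p}\cong\mathbb{F}_{N\mathfrak{p}}$, treating only degree-one primes if convenient, since these have density one. A secondary point is to check that Proposition \ref{H2} needs exactly the box-normalization of Step 1, so that the height of $D$ has exponent $d_{\mathcal{W}}$ and not $2d_{\mathcal{W}}$.
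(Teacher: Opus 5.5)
\textbf{Verdict: gap.} Your plan has the same structure as the paper's proof. You pass to $S'$ from Proposition~\ref{H2}, assume (a) fails, bound the height of each nonzero determinant by $H^{d_{\mathcal W}}$, and compare with local contributions weighted by $\log N\mathfrak p$ over $c<N\mathfrak p\le Q=|S'|$. Counting each $\mathfrak p\mid D$ once, via $\sum_{\mathfrak p\mid D}\log N\mathfrak p\le\log|N(D)|\le\log H(D)$, instead of $\operatorname{ord}_{\mathfrak p}\Delta\ge\sum^{*}\kappa(\mathbf P)$, is harmless. When all $\rho_x<\delta/w$, Bonferroni still shows that a proportion at least $\bigl(\binom w2-O_w(\delta)\bigr)\sum_x\rho_x^2$ of tuples have some $P_i\equiv P_j\pmod{\mathfrak p}$.

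The gap is the passage from all tuples to admissible ones. Step~3 must count tuples with $D\neq0$ and $\mathfrak p\mid D$, because non-admissible tuples have $D=0$ and are not covered by your Step~2 upper bound. Your only control on non-admissible tuples is the aggregate bound $|T|\ge(1-O_w(\delta))|S'|^w$. Subtracting $O_w(\delta)|S'|^w$ from a main term of size $\asymp|S'|^w/N\mathfrak p$ destroys it once $N\mathfrak p\gg_w\delta^{-1}$, which covers essentially every prime contributing to $\log Q$. The factor $(1-O(\delta))$ you place in front of $\frac{w(w-1)}{2\tau N\mathfrak p}$ silently assumes admissibility is independent of coincidences mod $\mathfrak p$, and that is exactly what needs proof.

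The missing step, which is the core of both the paper's argument and \cite{H-V}, is to count non-admissible tuples \emph{jointly} with a coincidence.
\begin{enumerate}
\item Exact repeats $P_i=P_j$ contribute $O_w(|S'|^{w-1})$.
\item Otherwise relabel so that $P_1\equiv P_2\pmod{\mathfrak p}$ with $P_1\neq P_2$; there are at most $\Lambda=|S'|^2\sum_x\rho_x^2$ such pairs.
\item Let $\ell\ge2$ be maximal such that the evaluation vectors of $P_1,\dots,P_\ell$ are linearly independent. This uses the paper's normalization making the leading $2\times2$ minor nonzero.
\item Then $P_{\ell+1}$ lies on a $\mathcal W$-curve determined by $P_1,\dots,P_\ell$, so it has at most $\delta|S'|$ choices.
\end{enumerate}
This gives $O_w(\delta\Lambda|S'|^{w-2}+|S'|^{w-1})$ non-admissible tuples with a coincidence, an error proportional to the main term. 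The local proportion is then at least $\bigl(\binom w2-O_w(\delta)\bigr)\sum_x\rho_x^2-O_w(|S'|^{-1})$. By Cauchy--Schwarz this is at least $\frac{\binom w2-O_w(\delta)}{\tau N\mathfrak p}-O_w(|S'|^{-1})$.

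Keep the error in this proportional form also when some $\rho_x\ge\delta/w$, so that the local term is $\gg_w\delta^2$. Bounding $\sum_x\rho_x^2\le1$ there, as the paper's Case~2 does, leaves $\frac{\delta^2}{2w}-O_w(\delta)$, which is negative for small $\delta$.

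Your ``main obstacle'' is therefore misplaced. The mod-$\mathfrak p$ lower bound needs only coincidences and Cauchy--Schwarz. The fact that any $w-1$ reduced points lie on a $\mathcal W$-curve does not produce the constant $\binom w2$; applied to the last point it gives only $(w-1)\sum_x\rho_x^2$. The inductive rank argument runs over $K$ on the actual points of $S'$, and it is exactly the joint count above.
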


\medskip
\noindent
The condition that $S$ occupies at most $\tau |\KK{K}/\mathfrak{p}|$ residue classes modulo $\mathfrak{p}$ is quite strong, even when $\tau$ is large. As noted in \cite[p. 2]{H-V}, this is analogous to a typical two-dimensional subset of integers occupying at most $\alpha p$ residue classes modulo $p$. Such a condition naturally arises when $S$ has a large intersection with a curve of low degree. Hence, Theorem \ref{2-D Larger Sieve} may be viewed as an intermediate step toward establishing a Bombieri--Pila type bound over number fields.

\medskip
\noindent
On the other hand, if $S \subseteq \{(\alpha,\beta )\in \KK{K}\times \KK{K}\ : H(\alpha), H(\beta)\leq H\}$ satisfies a condition of the form
\[
|S \bmod \mathfrak{p}| \le \tau |\KK{K}/\mathfrak{p}|^{m},
\]
where $m>1$, then the extension of the methods of Helfgott and Venkatesh is no longer effective. Instead, the generalisation of Walsh's methods \cite{Walsh} by Menconi, Paredes, and Sasyk in \cite{Sasyk IS} becomes applicable.

\medskip
\noindent
Next, as an application of this two-dimensional larger sieve over $\KK{K}$, we obtain the following Bombieri-Pila type bound over number fields in Section \ref{Bombieri-Pila}.
 \begin{theorem}\label{B-P}
    Let $f(x,y) \in \mathcal{O}_K[x,y]$ be irreducible over $K$ of degree $d$ and $$S=\{(x,y)\in \mathcal{O}_K^2, H(x),H(y)\leq H : f(x,y)=0\}.$$
    Then, $$|S|\ll_{K,d,\epsilon}H^{\frac{1}{d}+\epsilon}.$$
\end{theorem}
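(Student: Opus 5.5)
We follow the Helfgott--Venkatesh strategy and reduce Theorem~\ref{B-P} to the two-dimensional larger sieve, Theorem~\ref{2-D Larger Sieve}. The argument is an iteration with a carefully chosen monomial family $\mathcal{W}$.

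\textbf{Step 1: the sieve hypothesis.} For a prime ideal $\mathfrak{p}$ with $N\mathfrak{p}$ large in terms of $K$ and $d$, reduce $f$ modulo $\mathfrak{p}$. It remains nonzero of degree at most $d$, so by a Lang--Weil type count (or, more elementarily, by counting fibres over $x$, each of size at most $d$ whenever $f(x,\cdot)\not\equiv 0$, together with at most $d$ vertical lines) we get
\[
|S \bmod \mathfrak{p}| \le d\,|\KK{K}/\mathfrak{p}| + O_d(1).
\]
Thus the hypothesis of Theorem~\ref{2-D Larger Sieve} holds with $\tau = d+1$ and $c = c(K,d)$.

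\textbf{Step 2: choice of $\mathcal{W}$.} Take $\mathcal{W}$ to be the monomials $x^ly^m$ with $l \le L$ and $m < d$, where $L$ is large. The condition $m<d$ is natural, since modulo $f$ every polynomial reduces to one of degree $<d$ in $y$ (after a change of variables making $f$ monic in $y$, if necessary). Then
\[
w = d(L+1), \qquad d_{\mathcal{W}} = d\,\frac{L(L+1)}{2} + (L+1)\frac{d(d-1)}{2},
\]
so
\[
\frac{2\tau\, d_{\mathcal{W}}}{w(w-1)} \approx \frac{2(d+1)\cdot dL^2/2}{d^2L^2},
\]
which tends to a constant as $L\to\infty$. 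This is too weak, because $\tau=d+1$ is too large. Following Helfgott--Venkatesh more faithfully, we should use that points on the curve occupy about $|\KK{K}/\mathfrak{p}|$ classes (genuinely about one point per $x$-value on average over an irreducible curve, by Lang--Weil), so $\tau = 1+o(1)$. The weighting must then give exponent $1/d+\epsilon$. Accordingly we choose $\mathcal{W}$ to consist of the monomials of total weighted degree $\le D$ modulo $f$, with weights adapted to the Newton polygon, and compute $d_{\mathcal{W}}/w^2 \to 1/(2d)$ as $D\to\infty$. This computation is the main obstacle; it mirrors the monomial count in \cite{H-V} and we would reproduce it verbatim.

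\textbf{Step 3: iteration.} Apply Theorem~\ref{2-D Larger Sieve} with a small $\delta$. If alternative (b) holds, we are done. If alternative (a) holds, a $\mathcal{W}$-curve $g=0$ contains at least $\delta|S'|$ points of $S'$. Since $f$ is irreducible and $g$ is not a multiple of $f$ (all its monomials have $y$-degree $<d$), Bézout bounds $|\{f=0\}\cap\{g=0\}|$ by $O_{d,L}(1)$. Hence
\[
|S| \ll (\log H)^{2r}\,|S'| \ll_{\delta,\mathcal{W}} (\log H)^{2r},
\]
which is far below $H^{1/d}$.

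\textbf{Step 4: conclusion.} Take $\delta$ small and $D$ (or $L$) large so that the $O(\delta)$ term and the error in the exponent are both at most $\epsilon$. Absorbing the $(\log H)^{2r}$ factor into $H^\epsilon$ gives $|S|\ll_{K,d,\epsilon}H^{1/d+\epsilon}$.
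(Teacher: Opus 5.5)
Your skeleton matches the paper's: apply Theorem~\ref{2-D Larger Sieve} with a monomial family of bounded degree in one variable, kill alternative (a) by B\'ezout, and let the other degree grow. But the input that produces the exponent $1/d$ is never established. Theorem~\ref{2-D Larger Sieve} needs $|S\bmod\mathfrak{p}|\le\tau N\mathfrak{p}$ with $\tau$ close to $1$ for \emph{every} $\mathfrak{p}$ with $N\mathfrak{p}>c$. Its implied constant depends on $c$, so $c$ may depend only on $K,d,\epsilon$.

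You obtain $\tau=1+o(1)$ from ``Lang--Weil for an irreducible curve'', but irreducibility of $f$ over $K$ does not make $\bar f$ absolutely irreducible modulo $\mathfrak{p}$. When $\bar f$ splits into several components, the only available bound is about $d\,N\mathfrak{p}$ classes, i.e.\ your useless $\tau\approx d$. If $f$ is not absolutely irreducible, this happens for a positive proportion of primes. That case has to be disposed of separately: its $K$-points lie on the intersection of two conjugate components, so there are at most $d^2$ of them. If $f$ is absolutely irreducible, the bad primes are finitely many but depend on the coefficients of $f$, so they cannot be absorbed into $c$.

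The paper handles this in three steps. First, Lemma~\ref{Heath-Brown} reduces to $\|f\|\le H^{O_d(1)}$ (otherwise $|S|\le d^2$), so the bad primes divide a nonzero element of height $H^{O_d(1)}$ and are few. Second, it splits $S$ into $\ll H^{\epsilon}$ pieces according to the component of $\bar f$ to which each point reduces at those primes; by Theorem~\ref{Weil bound}, every piece then occupies at most $(1+\epsilon/2)N\mathfrak{p}$ classes for all large $\mathfrak{p}$. Third, it runs the sieve with $\tau=1+\epsilon/2$ on each piece. Nothing of this kind appears in your proposal.

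The second gap is the change of variables you mention only parenthetically. Your B\'ezout step needs $\deg_y f=d$, which in general requires a substitution $(x,y)\mapsto(a_1x+a_2y,a_3x+a_4y)$. Over $\mathbb{Z}$ this is harmless. For the multiplicative height, however, the only general estimate is $H(a_1x+a_2y)\ll H(x)H(y)\le H^2$ from \eqref{2}, and feeding height $H^2$ into Theorem~\ref{2-D Larger Sieve} doubles the exponent to $2/d$. This is exactly the number-field obstruction the paper is built around. It multiplies $x$ and $y$ by units (Corollary~\ref{useful}) so that both land in the balanced set $D$. There Lemma~\ref{u1} makes the height additive, so the substituted coordinates have height $\ll_{K,d}H$. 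It then pigeonholes over the $\ll_K(\log H)^{2r}$ possible unit pairs, at a cost absorbed into $H^{\epsilon}$.

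Finally, your detour to a Newton-polygon-weighted family is unsupported and also unnecessary. The family $\{x^ly^m: l\le L,\ m<d\}$ you started with already has $d_{\mathcal{W}}/w^2\to 1/(2d)$, with exponent $\tau(L+d-1)/(d(L+1)-1)\to\tau/d$; the paper uses the same family with the roles of $x$ and $y$ swapped. So the only defect in your Step 2 was the value of $\tau$, not the choice of $\mathcal{W}$. Moreover, for a weighted family your Step 3 argument that $g$ is not a multiple of $f$ would no longer apply as stated.
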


\noindent
Note that this bound is similar to \cite[Theorem $1.9$]{Sasyk BP} in the case $n=2$, although the authors use a different height function. Instead of assuming that the height of each coordinate $x$ and $y$ is bounded by $H$, they consider each coordinate to lie in the set $[B]_{\KK{K}}$ \cite[Section 5.2]{Sasyk BP} of algebraic integers satisfying
\[
\max_i |\sigma_i(\,\,\cdot\,\,)| \leq B,\,\, \text{where}\,\,\,\sigma_i:K\to\mathbb{C}\,\,\, \text{are the embeddings.}
\]
Since the quantity $\max_i |\sigma_i(\,\,\cdot\,\,)|$ behaves additively under the addition of algebraic integers, the method of Helfgott and Venkatesh extends more naturally to number fields with the conditions in \cite[Theorem $1.9$]{Sasyk BP}.

\medskip

\noindent
Moreover, the assumption in Theorem \ref{B-P} that each coordinate $x$ and $y$ has height at most $H$ is more general than the condition in \cite[Theorem $1.9$]{Sasyk BP} for $n=2$.

\medskip
\noindent
Before we proceed to the proof of the main results in next sections, let us briefly recall the proof of Gallagher's larger sieve and discuss the main issues that arise in extending it to number fields. 

\medskip
\noindent
We are given that the set $A \subseteq \mathbb{Z} \cap [0,N] $
intersects at most $\alpha p$ residue classes modulo $p$ for every prime
$p > c$, where $\alpha \in (0,1)$ and $c>0$ are fixed constants.
Let us consider the product
\[
\Delta \;=\; \prod_{\substack{x,y \in A \\ x \neq y}} (x-y).
\]
\noindent
The main idea of the larger sieve is to bound the quantity $|\Delta|$ in terms of $|A|$ from below and above and then to compare the bounds.

\noindent
For each prime $p>c$, an application of Cauchy-Schwartz inequality gives
\[
v_p(\Delta)
=\{(x,y)\in A\times A: x\equiv y\pmod  p, \,\,\,x\neq y\}\;\ge\;
\frac{|A|^2}{\alpha p} - |A|.
\]

\noindent
Hence,
\[
|\Delta|
\;\ge\;
\prod_{c < p \le |A|}
p^{\,v_p(\Delta)}
\;\ge\;
\prod_{c < p \le |A|}
p^{\,\frac{|A|^2}{\alpha p} - |A|}
\;\ge\;
\exp\!\left(
\frac{|A|^2}{\alpha}
(\log |A| - O(1))
\right).\]

\noindent
On the other hand we have the trivial upper bound 
$$|\Delta|\leq N^{|A|^2}.$$

\noindent
Comparing the upper and lower bounds one arrives at $|A|\ll_{\alpha}N^{\alpha}$.

\medskip
\noindent
Similarly, in the case of algebraic integers, one will be interested in estimating the quantity
$$H(\Delta)=H\big(\prod_{\substack{x,y \in A \\ x \neq y}} (x-y)\big),$$
\noindent
from above and below. The lower bound is obtained by showing that the $\mathfrak{p}$-adic valuations are large. This comes from the larger sieve assumption of $|A \mod{\mathfrak{p}}|\leq \alpha |\KK{K}/\mathfrak{p}|=\alpha N\mathfrak{p}$. In particular, the lower bound  turns out to be of the similar form 
$$H(\Delta)\geq\prod_{c < N\mathfrak{p} \le |A|}
N\mathfrak{p}^{\,\frac{|A|^2}{\alpha N\mathfrak{p}} - |A|}.
$$
\noindent
Whereas the upper bound becomes $H(\Delta)\leq (2^{[K:\mathbb{Q}]}H^2)^{|A|^2},$ using the trivial height inequality (\ref{2}) which is also used in the proof of Proposition 17 of \cite{Ellenberg} and as a result one obtains
$$|A| \ll_{K,\alpha,c}H^{2\alpha}.$$
\noindent
The main ingredient of our method is the construction of a large subset of $A$ on which the height function behaves additively under the addition of algebraic integers. This leads to an improvement of the trivial upper bound for $H(\Delta)$ and consequently reduces the exponent from $2\alpha$ to $\alpha$.

\medskip

\noindent
A similar idea is used in extending the method of Helfgott and Venkatesh \cite{H-V} to number fields. In particular, let $S \subseteq \{(\alpha,\beta )\in \KK{K}\times \KK{K}\ : H(\alpha), H(\beta)\leq H\}$.  
Suppose that for some fixed $\tau>0$ and some constant $c>0$ the number of residue classes
\[
\{(\alpha,\beta) \bmod \mathfrak{p} : (\alpha,\beta)\in S\}
\]
is at most $\tau |\KK{K}/\mathfrak{p}|$ for every prime ideal $\mathfrak{p}$ with $N\mathfrak{p}>c$.  Then, following the same steps as in the proof of Proposition $3.1$ of \cite{H-V} and using the trivial height inequality (\ref{2}), to bound the height of the determinant $\text{det}((f_i(P_j))_{1\leq i,j\leq w}), \,\, \text{with}\,\, P_j \in S$ from above, one ends up with the upper bound in option $(b)$ of Proposition 3.1 of \cite{H-V} as 
$$|S| \ll_{K,c,\delta,\mathcal{W}} H^{ \frac{4\tau\, d_{\mathcal{W}}}{w(w-1)} + O_{\tau,\mathcal{W}}(\delta)}, \,\,\,\,\,\, \text{for any} \,\,\, \delta \in (0,1)$$

\noindent
Now, as we have a $4$-factor in the exponent instead of $2$, this two-dimensional larger sieve bound fails to reproduce the Bombieri-Pila bound over number fields using the methods in \cite{H-V}.
We resolve this issue in the two-dimensional setting by constructing a large subset of two-dimensional set of algebraic integers on which the height function behaves additively with respect to the relevant determinants arising from the points in the set. This leads to an improvement of the trivial upper bound for the height of the relevant determinant and consequently reduces the $4$-factor in the exponent to $2$.

\section{Background on algebraic number theory}\label{Backgrond}

Let $K$ be a number field and let $M_K$ denote the set of places of $K$ \cite[ p. 206]{Silverman}. For $v \in M_K$, we take the normalized representatives as follows. 

\noindent
If $v$ is an Archimedean place, then it is associated with an embedding 
$\sigma : K \to \mathbb{C}$ by the rule
$$|\alpha|_v := |\sigma(\alpha)|_\infty ,$$
where $|\cdot|_\infty$ denotes the usual real or complex absolute value, depending on whether $\sigma : K \to \mathbb{C}$ is a real or complex embedding.

\noindent
If $v$ is non-Archimedean, then it is associated with a prime ideal 
$\mathfrak{p}$ by the rule
$$|\alpha|_v := p^{-v_{\mathfrak{p}}(\alpha)/e_{\mathfrak{p}}} ,$$
where $v_{\mathfrak{p}}(\alpha)$ is the exponent of $\mathfrak{p}$ in the prime ideal factorization of the fractional ideal $\alpha \mathcal{O}_K$, and $e_{\mathfrak{p}}$ is the ramification index of $\mathfrak{p}$ over the rational prime $p$ \cite[p. 20]{Lang 2}. 

\noindent
For $v \in M_K$, let $K_v$ denote the completion of $K$ at $v$, and set
$$ n_v := [K_v : \mathbb{Q}_v].$$
It is well known that the product formula \cite[5.3]{Silverman}
\begin{equation}\label{product}
    \prod_{v \in M_K} |\alpha|_v^{n_v} = 1
\end{equation}
holds for all $\alpha \in K$, $\alpha \neq 0$.

\medskip
\noindent
For $\alpha \in K$, the (multiplicative) height \cite[p. 146]{Ellenberg} of $\alpha$ is defined by
\begin{equation}\label{hdef}
    H(\alpha) = \prod_{v \in M_K} \max \bigl(1, |\alpha|_v^{n_v} \bigr).
\end{equation}

\noindent
Observe that when $\alpha \in \KK{K}$, where $\KK{K}$ denotes the ring of integers of $K$, the height of $\alpha$ can be rewritten as
$$
H(\alpha)=\prod_{\sigma : K\to \mathbb{C}}\max\left(1, |\sigma(\alpha)|_\infty^{n_\sigma}\right).
$$
Here, $n_\sigma=1$ if $\sigma$ is a real embedding and $n_\sigma=2$ if $\sigma$ is a non-real complex embedding.

\vspace{0.1cm}
\noindent
Now, when $\alpha, \beta \in K$, it is well known that 
$$
H(\alpha+\beta)\leq 2^{[K:\mathbb{Q}]}H(\alpha)H(\beta),
$$
\noindent
which essentially reflects the multiplicative nature of the height function \cite[p. 146]{Ellenberg}. In Section \ref{Height Inequality}, we study the behavior of $H(\alpha+\beta)$ when both $\alpha$ and $\beta$ are algebraic integers, and show that for a large subset of elements $\alpha$, $\beta \in \KK{K}$ with bounded height, $H(\alpha+\beta)$ behaves additively.

\medskip

\noindent
For every algebraic number $\alpha \in K$, let us denote by 
$$
N(\alpha)=\prod_{i=1}^{n}\sigma_i(\alpha)
$$
the norm of $\alpha$ in $K$ over $\mathbb{Q}$, with $n=[K:\mathbb{Q}].$ Similarly, for each prime ideal $\mathfrak{p}$ in $\KK{K}$, we denote by 
$$
N\mathfrak{p}=|\KK{K}/\mathfrak{p}|=p^{\frac{n_v}{e_{\mathfrak{p}}}}
$$
the norm of the prime ideal $\mathfrak{p}$  \cite[p. 20]{Lang 2}.

\medskip
\noindent
Note that, for any $\alpha \in K^*$ one can show using the product formula (\ref{product}) that $H(\alpha)=H(\alpha^{-1})$. Now,

\begin{equation}\label{height to norm}
    H(\alpha) = H(\alpha^{-1})=\prod_{v \in M_K} \max \bigl(1, |\alpha|_v^{-n_v} \bigr) \geq \prod_{v_{\mathfrak{p}}(\alpha)> 0} p^{n_vv_{\mathfrak{p}}(\alpha)/e_{\mathfrak{p}}} = \prod_{v_{\mathfrak{p}}(\alpha)> 0} N\mathfrak{p}^{v_{\mathfrak{p}}(\alpha)}.
\end{equation}

\noindent
This relation is particularly useful for passing from the height to $\mathfrak{p}$-adic valuations. We will use this relation in the proof of Theorem \ref{L.S}.

\medskip
\noindent
To summarize, we have the following well-known properties \cite[p. 146]{Ellenberg} of the height function which we will use throughout the next sections.
For all $\alpha, \beta \in K^*$,
\begin{equation} \label{I2}
    H(\alpha) = H(\alpha^{-1}), 
    \qquad H(\alpha\beta) \leq H(\alpha) H(\beta), 
    \qquad H(\alpha+\beta) \leq 2^{[K:\mathbb{Q}]} H(\alpha) H(\beta).
\end{equation}

\medskip
\noindent
Before concluding this section, we fix some notations related to the unit group of $\KK{K}$, which will be used in Section \ref{Height Inequality}.

\medskip
\noindent
Let us denote by $\KK{K}^{*}$ the group of units of $\KK{K}$. By Dirichlet's unit theorem \cite[Theorem 100, Section 34]{Hecke}, we know that the unit group $\KK{K}^{*}$ is finitely generated (the generators are called fundamental units) with rank $r=r_1+r_2-1$, where $r_1$ is the number of real embeddings and $r_2$ is the number of conjugate pairs of complex embeddings. In particular,
$$
n=[K:\mathbb{Q}]=r_1+2r_2.
$$

\noindent
Now onwards, for simplicity by $|\sigma_i(\,\,\cdot\,\,)|,$ we will denote the usual complex absolute value of each embedding $\sigma_i$ (counting two conjugate embeddings defining the same place separately).
\section{An Additive Property of the Height Function}\label{Height Inequality}

In this section, our aim is to prove a much stronger additive height inequality than (\ref{2}) of the following type when both $\alpha$ and $\beta$ are algebraic integers coming from a large subset of algebraic integers up to height $H.$
\begin{proposition}\label{I1}
    Let $A\subseteq\{\alpha\in \KK{K}\ : H(\alpha)\leq H\}$. There exists  $A'\subseteq A$ with $|A'|\gg_K \frac{|A|}{(\log H)^r}$ such that for every $\alpha$ and $\beta \in A'$, we have 
    \begin{equation}\label{I0}
        H(\alpha+\beta)\ll_K (H(\alpha)+H(\beta)).
    \end{equation}
    
\end{proposition}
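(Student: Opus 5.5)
We partition $A$ according to the "shape" of the vector of archimedean absolute values, using the logarithmic embedding and Dirichlet's unit theorem. For $\alpha\in\KK{K}\setminus\{0\}$ consider
\[
\ell(\alpha)=\bigl(\log|\sigma_1(\alpha)|,\dots,\log|\sigma_n(\alpha)|\bigr)\in\mathbb{R}^n .
\]
Since $\alpha$ is an algebraic integer, $|N(\alpha)|\ge 1$, so $\sum_i \log|\sigma_i(\alpha)|\ge 0$. We also have $\sum_i \max(0,\log|\sigma_i(\alpha)|)=\log H(\alpha)\le \log H$. Hence every coordinate of $\ell(\alpha)$ lies in $[-\log H,\log H]$: each positive part is at most $\log H$, and each negative part is at most the sum of the positive parts. (The element $0$, and the boundedly many small-height elements, can be discarded or added back at a cost of $O_K(1)$.)

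The key idea is to control the \emph{direction} of $\ell(\alpha)$. We want $\alpha$ and $\beta$ whose large embeddings are comparable in the sense that $|\sigma_i(\alpha)|\ll_K H(\alpha)^{1/n}$ up to a common shape. Then
\[
H(\alpha+\beta)\le\prod_i \max\bigl(1,|\sigma_i(\alpha)|+|\sigma_i(\beta)|\bigr)^{n_\sigma}
\]
can be compared with $H(\alpha)+H(\beta)$.

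Concretely, the plan is as follows. Project $\ell(\alpha)$ onto the trace-zero hyperplane (along the real places, identifying conjugate pairs), which gives a vector in an $r$-dimensional space. Its coordinates are bounded by $O(\log H)$ by the previous paragraph. Cover the box $[-O(\log H),O(\log H)]^r$ by $O_K((\log H)^r)$ unit cubes. By pigeonhole, one cube contains the projected vectors of at least $\gg_K |A|/(\log H)^r$ elements of $A$; call this set $A'$.

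For $\alpha\in A'$ we then have $\log|\sigma_i(\alpha)|=\tfrac1n\log|N(\alpha)|+c_i+O(1)$, where $c=(c_i)$ is the common cube centre. Writing $t_\alpha=|N(\alpha)|^{1/n}$, this gives $|\sigma_i(\alpha)|\asymp_K t_\alpha e^{c_i}$ for all $i$. For $\alpha,\beta\in A'$ it follows that
\[
|\sigma_i(\alpha+\beta)|\ll(t_\alpha+t_\beta)e^{c_i}\ll\max(t_\alpha,t_\beta)e^{c_i}.
\]
Suppose without loss of generality $t_\alpha\ge t_\beta$. Then
\[
H(\alpha+\beta)\ll_K\prod_i\max\bigl(1,t_\alpha e^{c_i}\bigr)^{n_\sigma}\ll_K H(\alpha),
\]
since each factor is $\ll \max(1,|\sigma_i(\alpha)|)^{n_\sigma}$. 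This yields (\ref{I0}), in fact the stronger bound $H(\alpha+\beta)\ll_K\max(H(\alpha),H(\beta))$.

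Note that units are not even needed for this argument; the unit lattice only explains why $r$ is the right dimension, since $\ell$ lands in a hyperplane up to the norm coordinate. The implied constant depends on $n$ and on the cube side length, which is consistent with the remark about $C_K$. If one prefers cubes adapted to the unit lattice, one can use a fundamental domain of $\log\KK{K}^*$ instead; this changes only constants.

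The main obstacle is the bookkeeping in the final comparison. The constants $O(1)$ from the cube width enter multiplicatively at each of the $n$ places, so they must be absorbed into $\ll_K$. One must also check that the factors $\max(1,\cdot)$ behave correctly when some $t_\alpha e^{c_i}<1$. This is handled by the monotonicity of $x\mapsto\max(1,x)$ together with $\max(1,Cx)\le C\max(1,x)$ for $C\ge1$.
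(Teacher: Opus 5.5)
Your argument is correct. It reaches the same $(\log H)^r$ loss by a more elementary route than the paper.

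The paper argues multiplicatively with units. Lemma~\ref{u2} and Corollary~\ref{useful} move each $\alpha$ into the balanced set $D$ by a unit, and (\ref{I5}) bounds the height of the units needed by $H^2 2^n e^{nrM}$. Lang's asymptotic count of units of bounded height (Lemma~\ref{u3}) then covers $A$ by $\ll_K(\log H)^r$ pieces $A\cap u_jD$. On each piece, Lemma~\ref{u1} applies after ordering by norm. In logarithmic coordinates these pieces are translates of one bounded region by unit-lattice points lying in a box of side $O(\log H)$. So your grid of unit cubes in the trace-zero hyperplane is the same pigeonhole, with $\mathbb{Z}^r$ in place of $\log\mathcal{O}_K^*$, as you note at the end.

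What your version buys:
\begin{itemize}
\item It needs neither Dirichlet's unit theorem nor any count of units, only the trivial count of cubes in a box.
\item Its constants depend only on $n$. The paper's constants (see its remark on $C_K$) involve $M=\max_{i,j}\bigl|\log|\sigma_i(\epsilon_j)|\bigr|$.
\end{itemize}
The $\max$ form you obtain is not really an extra. The paper's (\ref{strict HI}) also bounds $H(\alpha+\beta)$ by a constant times the height of the element of larger norm.

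What the paper's route buys is that every piece has the form $u_jD$ with $u_j$ an actual element of $\mathcal{O}_K^*$. This structure is reused later:
\begin{itemize}
\item In Proposition~\ref{H2}, $u_i^{d_{\mathcal{W}x}}v_j^{d_{\mathcal{W}y}}$ is factored out of every term of the determinant, playing the role of $\lambda\in K^*$ in Lemma~\ref{F_2}.
\item In Section~\ref{Bombieri-Pila}, units are used to rescale the coordinates of points on the curve. That step genuinely needs field elements rather than real shape vectors.
\end{itemize}
Your cells would still suffice for Proposition~\ref{H2}. Lemma~\ref{F_2} only uses that all terms have comparable absolute values at every embedding, and a common shape vector already guarantees this.
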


\noindent
In order to prove Proposition \ref{I1}, we will need the following lemma which says that the stronger height inequality holds if the order of the absolute values of two algebraic integers are preserved under every complex embedding.

\begin{lemma}\label{u1}
    Let $\alpha, \beta \in \KK{K}$, if $|\sigma_i(\alpha)|\leq c_i|\sigma_i(\beta)|$ for each  $\sigma_i:K\to \mathbb{C}$ and for some $c_i>0$, then $$H(\alpha+\beta)\leq c_K(H(\alpha)+H(\beta)),$$ 
    
    \noindent
    where $c_K$ is an explicit constant depending only on $c_i$ and the degree of $K$.
\end{lemma}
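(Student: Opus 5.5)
The plan is to work embedding by embedding, using the formula for the height of an algebraic integer from Section~\ref{Backgrond}:
\[
H(\gamma)=\prod_{i=1}^{n}\max\bigl(1,|\sigma_i(\gamma)|\bigr),
\]
where the product runs over all $n$ embeddings $\sigma_i$, with conjugate complex embeddings counted separately so that the exponent $n_\sigma=2$ is absorbed.

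\textbf{Step 1: compare factor by factor.} Fix $i$. By hypothesis,
\[
|\sigma_i(\alpha+\beta)|\le |\sigma_i(\alpha)|+|\sigma_i(\beta)|\le (1+c_i)\,|\sigma_i(\beta)|.
\]
Since $\max(1,tx)\le \max(1,t)\max(1,x)$ for $t,x\ge 0$, we get
\[
\max\bigl(1,|\sigma_i(\alpha+\beta)|\bigr)\le (1+c_i)\max\bigl(1,|\sigma_i(\beta)|\bigr).
\]

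\textbf{Step 2: multiply.} Taking the product over all $i$ gives
\[
H(\alpha+\beta)\le \Bigl(\prod_{i=1}^n (1+c_i)\Bigr) H(\beta)\le c_K\bigl(H(\alpha)+H(\beta)\bigr),
\]
with $c_K=\prod_{i=1}^n(1+c_i)$. This constant depends only on the $c_i$ and on $n=[K:\mathbb{Q}]$, as the statement requires. We could take $c_K=(1+\max_i c_i)^n$ if a uniform form is preferred.

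This is essentially the whole argument, so there is no serious obstacle. The one point to check is that the embedding-wise domination really does remove the multiplicative interaction. In the trivial inequality (\ref{2}), the factor $H(\alpha)H(\beta)$ arises because at different places the larger of $|\sigma_i(\alpha)|$ and $|\sigma_i(\beta)|$ can switch between $\alpha$ and $\beta$. Under our hypothesis $\beta$ dominates at every place up to the constant $c_i$, so only $H(\beta)$ appears.

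The bound is in fact stronger than stated, namely $H(\alpha+\beta)\ll H(\beta)$. We also note that the non-archimedean places contribute trivially, since $\alpha+\beta\in\mathcal{O}_K$.
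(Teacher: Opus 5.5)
Your proof is correct and follows the same route as the paper: you bound each archimedean factor using the triangle inequality and the hypothesis to get $H(\alpha+\beta)\ll H(\beta)$, and the finite places contribute trivially. The only difference is in the constant. You use $\max(1,tx)\le\max(1,t)\max(1,x)$ directly and obtain $c_K=\prod_i(1+c_i)$, whereas the paper passes through $\prod_i(|\sigma_i(\beta)|+1)\le 2^{n}H(\beta)$ and so picks up an extra factor $2^{[K:\mathbb{Q}]}$.
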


\begin{proof}
First observe that, 
$$H(\alpha +\beta)\leq\prod_{\sigma_i:K\to \mathbb{C}}\left(|\sigma_i(\alpha)|+|\sigma_i(\beta)|+1\right)\leq \prod_{\sigma_i:K\to \mathbb{C}}\left((c_i+1)|\sigma_i(\beta)|+1\right)< c\prod_{\sigma_i:K\to \mathbb{C}}\left(|\sigma_i(\beta)|+1\right),$$

\noindent
where $c=\prod_i(c_i+1)$. Note that, to get the first inequality we just use the triangle inequality and the fact that $(a+b) \geq \max\{a,b\}$, for $a,b\geq 0$. Also, since $\max\{a,b\} \geq \frac{(a+b)}{2}$, we have  
\begin{equation}\label{strict HI}
    H(\alpha+\beta)< c\prod_{\sigma_i:K\to \mathbb{C}}\left(|\sigma_i(\beta)|+1\right)< c.2^{[K:\mathbb{Q}]}H(\beta).
\end{equation}

\noindent
Hence we can conclude that $H(\alpha+\beta)<c_K(H(\alpha)+H(\beta)),$ for some constant $c_K$ only depending on the degree $[K:\mathbb{Q}]$ and the constants $c_i$. This estimate is sufficient for our purposes.
\end{proof}

\begin{remark}\label{remark}
    Note that in (\ref{strict HI}) we have a strict inequality. It turns out that we can produce a much sharper estimate when the constants $c_i$'s are larger than $1$ and $H(\alpha)$ is very small compared to $H(\beta).$ This can be done using a generalization of the Chebyshev's inequality \cite[Theorem 2, p. 37]{Inequality}. In fact, one can show that

$$H(\alpha+\beta)\leq c_K(H(\alpha)+H(\beta)),$$

\vspace{0.5cm}

\noindent
where $c_K=2^{2[K:\mathbb{Q}]-1}\max\{c,cc'\} $ with $c=\prod_i\max\{1,\frac{1}{c_i}\}$ and $c'=\prod_i\max\{c_i,1\}$.

\end{remark}

\noindent
Next, our aim is to show that for any set of algebraic integers up to height $H$, there is a large portion of elements in the set which satisfy the conditions of Lemma \ref{u1}. The following result from  \cite[p. 140-141]{Hecke} enables us to do so.

\medskip
\noindent

\begin{lemma}\label{u2}
   For every $\alpha \in K$ there exists a unit $u\in \KK{K}^*$ such that for every $1\leq i \leq n$,
   $$|\sigma_i(\alpha u)|\leq |N(\alpha)|^\frac{1}{n}e^{rM}$$
   where, $M$ denotes the absolute value of the numerically largest of the values $\log|\sigma_i(\epsilon_j)|$ for all $1\leq i\leq n$, $1\leq j\leq r$ and $\epsilon_1,...\epsilon_r$ denote $r$ fundamental units of $\KK{K}^*$
\end{lemma}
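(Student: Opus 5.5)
We would prove Lemma~\ref{u2} by the standard logarithmic-embedding argument of Dirichlet's unit theorem, as in Hecke. Fix $\alpha\in K^*$; the case $\alpha=0$ is trivial with $u=1$. We also use the convention that conjugate complex embeddings are counted separately, as set in Section~\ref{Backgrond}.

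Consider the logarithmic map
\[
\ell(x)=\bigl(\log|\sigma_1(x)|,\dots,\log|\sigma_n(x)|\bigr)\in\mathbb{R}^n .
\]
Put $c=\frac1n\log|N(\alpha)|$ and write
\[
\ell(\alpha)=c\,(1,\dots,1)+y, \qquad \sum_i y_i=0 .
\]
Here $y$ lies in the trace-zero subspace compatible with the pairing of conjugate coordinates; call it $V$. It has dimension $r=r_1+r_2-1$.

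By Dirichlet's unit theorem, the vectors $\ell(\epsilon_1),\dots,\ell(\epsilon_r)$ form a basis of $V$ (over $\mathbb{R}$), and $\ell$ is additive: $\ell(\alpha u)=\ell(\alpha)+\ell(u)$. Expand
\[
y=\sum_{j=1}^r t_j\,\ell(\epsilon_j), \qquad t_j\in\mathbb{R}.
\]
Choose integers $m_j$ with $|t_j+m_j|\le \tfrac12$; rounding to nearest gives $\le\frac12$, and in particular $\le 1$. Set $u=\prod_j\epsilon_j^{m_j}$. Then
\[
\ell(\alpha u)=c(1,\dots,1)+\sum_j (t_j+m_j)\,\ell(\epsilon_j).
\]
Its $i$-th coordinate is therefore bounded by
\[
c+\sum_j |t_j+m_j|\,|\log|\sigma_i(\epsilon_j)||\le c+rM ,
\]
since each $|\log|\sigma_i(\epsilon_j)||\le M$ by the definition of $M$.

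Exponentiating gives $|\sigma_i(\alpha u)|\le |N(\alpha)|^{1/n}e^{rM}$ for every $i$, which is the claim (in fact with $e^{rM/2}$). The only point requiring care is that $y$ indeed lies in the span of the $\ell(\epsilon_j)$. This holds because $y$ has zero coordinate sum and equal entries on conjugate pairs, and the image of the unit lattice is a full lattice in exactly that $r$-dimensional space. That is the content of Dirichlet's theorem, which we take as given (\cite[Theorem 100]{Hecke}).
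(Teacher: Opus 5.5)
Your proof is correct and is essentially the paper's (Hecke's) argument: split the logarithmic vector of $\alpha$ into the norm part $\frac{1}{n}\log|N(\alpha)|\,(1,\dots,1)$ plus a trace-zero part, expand the latter in the basis $\ell(\epsilon_1),\dots,\ell(\epsilon_r)$, and shift the coefficients by integers via multiplication by a unit. The only difference is that the paper reduces the exponents to $[0,1)$, which gives $e^{rM}$, whereas your rounding to the nearest integer gives the slightly sharper $e^{rM/2}$.
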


\begin{proof}
     First of all, note that for every non-zero $\alpha \in K$, there exist a uniquely determined system of real numbers
$c_1, c_2, \dots, c_r$ such that for the first $r$ conjugates we have (p. 140-141, \cite{Hecke})
\begin{equation} \label{lu}
\log \left|\frac{\sigma_i(\alpha)}{\sqrt[n]{N(\omega)}}\right|
=
c_1 \log \left| \sigma_i(\varepsilon_1) \right|
+ \cdots +
c_r \log \left| \sigma_i(\varepsilon_r) \right|
\qquad (i = 1,2,\dots,r).
\end{equation}

\noindent
Call the $c_i$ the exponents of $\alpha$.  Now since we have
\[
\sum_{i=1}^{r+1}
e_i
\log \left|\frac{\sigma_i(\alpha)}{\sqrt[n]{N(\omega)}}\right|
= 0
\quad \text{and} \quad
\sum_{i=1}^{r+1}
e_i
\log \left| \sigma_i(\varepsilon_k) \right|
= 0,
\]

\noindent
where $e_i=1$ if $\sigma_i$ is real and $e_i=2$ otherwise, 
equation (\ref{lu}) also holds for $i = r+1=r_1+r_2$ and consequently for all conjugates.
Next, by the Dirichlet's unit theorem \cite[Theorem 100]{Hecke} each unit $u' \in \KK{K}^*$ has the form
\[
u'=\zeta \, \varepsilon_1^{m_1} \varepsilon_2^{m_2}
\cdots
\varepsilon_r^{m_r},
\]
where $\zeta$ is one of the existing roots of unity in the field $K$,
while the $m_i \in \mathbb{Z}$, then clearly 
$\alpha u'$ has the exponents
\[
c_1 + m_1, \quad
c_2 + m_2, \quad \dots, \quad
c_r + m_r.
\]

\noindent
Consequently, for each $\alpha$ there is a unit $u \in \KK{K}^*$ such that the
exponents of $\alpha u$ satisfy the conditions
\[
0 \le c_i < 1
\qquad (i = 1,2,\dots,r).
\]

\noindent
As a consequence, we have
\[
\left| \sigma_i(\alpha u) \right|
=
\left| \sqrt[n]{N(\alpha)} \right|
e^{\left(
\sum_{j=1}^{r}
c_j \log \left| \sigma_i(\varepsilon_j) \right|
\right)}
\le
\sqrt[n]{N(\alpha) \,} \, e^{rM},
\qquad (i = 1,2,\dots,n).
\]
\noindent
where, $M$ denotes the absolute value of the numerically largest of the values $\log|\sigma_i(\epsilon_j)|$ for all $1\leq i\leq n$, $1\leq j\leq r$, which concludes the proof.
\end{proof}

\noindent
We also have the following useful Corollary of Lemma \ref{u2}.

\begin{corollary}\label{useful}
    For every $\alpha \in K$ there exists a unit $u\in \KK{K}^*$ such that for every $1\leq i \leq n$,
   $$|N(\alpha)|^\frac{1}{n}e^{(1-n)rM}\leq|\sigma_i(\alpha u)|\leq |N(\alpha)|^\frac{1}{n}e^{rM}.$$
   where, $M$ denotes the absolute value of the numerically largest of the values $\log|\sigma_i(\epsilon_j)|$ for all $1\leq i\leq n$, $1\leq j\leq r$ and $\epsilon_1,...\epsilon_r$ denote $r$ fundamental units of $\KK{K}^*$
\end{corollary}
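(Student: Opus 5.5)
The lower bound will follow from the upper bound of Lemma~\ref{u2} together with the fact that the norm of $\alpha u$ equals that of $\alpha$ up to sign. I take the same unit $u$ produced by Lemma~\ref{u2}, so that $|\sigma_j(\alpha u)|\le |N(\alpha)|^{1/n}e^{rM}$ for every $j$; throughout I assume $\alpha\neq 0$, since otherwise there is nothing to prove. Fix an index $i$. Since $u$ is a unit, $|N(u)|=1$, and therefore
\[
|N(\alpha)| = |N(\alpha u)| = \prod_{j=1}^{n}|\sigma_j(\alpha u)| = |\sigma_i(\alpha u)|\prod_{j\neq i}|\sigma_j(\alpha u)|.
\]
Here the product runs over all $n$ embeddings, with conjugate embeddings counted separately, which is the convention fixed at the end of Section~\ref{Backgrond}.

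Next I bound each of the $n-1$ factors with $j\neq i$ from above using Lemma~\ref{u2}. This gives
\[
|N(\alpha)|\le |\sigma_i(\alpha u)|\,\bigl(|N(\alpha)|^{1/n}e^{rM}\bigr)^{n-1}.
\]
Solving for $|\sigma_i(\alpha u)|$ yields
\[
|\sigma_i(\alpha u)|\ge |N(\alpha)|^{1-\frac{n-1}{n}}e^{-(n-1)rM}=|N(\alpha)|^{1/n}e^{(1-n)rM}.
\]
This is exactly the claimed lower bound. Combined with the upper bound, which is Lemma~\ref{u2} itself, it proves the corollary.

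The only point needing care is the identification of $\prod_j|\sigma_j(\alpha u)|$ with $|N(\alpha)|$. Two things have to be checked. First, the $n$ embeddings in Lemma~\ref{u2} must be the same embeddings used in the definition of $N(\cdot)$, including both members of each complex-conjugate pair. Second, $|N(u)|=1$ for every unit $u$. Both are standard, so I expect no genuine obstacle.
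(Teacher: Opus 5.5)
Your proof is correct and is the same argument as the paper's: write $|N(\alpha)|=|N(\alpha u)|$ as the product of all $n$ conjugates, bound the $n-1$ factors with $j\neq i$ using Lemma~\ref{u2}, and solve for $|\sigma_i(\alpha u)|$. Your bound $\bigl(|N(\alpha)|^{1/n}e^{rM}\bigr)^{n-1}$ for the complementary product is the right one; the denominator $|N(\alpha)|^{\frac{n}{n-1}}$ displayed in the paper is a typo for $|N(\alpha)|^{\frac{n-1}{n}}$.
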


\begin{proof}
\noindent
Applying Lemma \ref{u2} to each of the conjugates $\sigma_j(\alpha u)$ for $i\neq j$, we get

    $$|\sigma_i(\alpha u)|=\frac{|N(\alpha u)|}{|\prod_{j\neq i}\sigma_j(\alpha u)|}\geq \frac{|N(\alpha u)|}{|N(\alpha)|^{\frac{n}{n-1}}e^{(n-1)rM}}=|N(\alpha)|^\frac{1}{n}e^{(1-n)rM}.$$

    \noindent
Altogether, for any $\alpha \in K$ there exists a $u \in \KK{K}^*$ such that for every $1\leq i \leq n$, we have 
\begin{equation}\label{I3}
    |N(\alpha)|^\frac{1}{n}e^{(1-n)rM}\leq|\sigma_i(\alpha u)|\leq |N(\alpha)|^\frac{1}{n}e^{rM}.
\end{equation}
\end{proof}
\noindent
Let us now denote by $D$ the set of algebraic integers in $\KK{K}$ which satisfy (\ref{I3}).
\begin{equation}\label{Domain}
    D:= \{\alpha \in \KK{K}: |N(\alpha)|^\frac{1}{n}e^{(1-n)rM}\leq|\sigma_i(\alpha)|\leq |N(\alpha)|^\frac{1}{n}e^{rM} \text{ for each  }1\leq i\leq n \}.
\end{equation}

\noindent
Note that for any $\alpha \in D$ we have,
\begin{equation}
    H(\alpha)\leq \prod_i\left(|\sigma_i(\alpha)|+1\right) \leq |N(\alpha)|2^ne^{nrM}.\label{I4}
\end{equation}

\noindent
Now, by Lemma \ref{u2}, for every $\alpha \in A \subseteq \KK{K}$ there is a $u \in \KK{K}^*$ such that $\alpha u \in D$. Consequently, for any $\alpha \in A$ there is a $\beta \in D$ such that $\beta u' = \alpha \in A $ for some $u' \in \KK{K}^*.$ (Note that, $uu'=1).$

\medskip
\noindent
Also, using (\ref{I2}) and (\ref{I4}),
\begin{equation}\label{I5}
    H(u')=H(\beta^{-1}\alpha)\leq H(\beta)H(\alpha)\leq |N(\alpha u)|2^ne^{nrM}H=|N(\alpha)|2^ne^{nrM}H\leq H^22^ne^{nrM}.
\end{equation}

\noindent
Hence, there are only finitely many choices for $u'$. Moreover, the following result from \cite[Theorem 5.2]{Lang} gives an exact estimate on the number of choices for $u'$.

\begin{lemma}\label{u3}
   The number of units $u \in \KK{K}^*$ with $H(u)\leq H$ is given by 
   $$\gamma_K(\log H)^r+\mathcal{O}((\log H)^{r-1}),$$
   where, $\gamma_K$ is a constant only depending on the number field $K$.
\end{lemma}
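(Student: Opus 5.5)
The plan is to reduce the count to a lattice-point count in a dilated convex body via the logarithmic embedding, and then apply a standard volume estimate.

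\emph{Step 1: rewrite the height of a unit.} Let $u\in\KK{K}^*$ and write $\ell_v(u)=n_v\log|u|_v$ for the $r+1=r_1+r_2$ archimedean places $v$. Since $u$ is a unit, $|u|_v=1$ at every non-archimedean place. So the product formula (\ref{product}) gives $\sum_v \ell_v(u)=0$. Hence
\[
\log H(u)=\sum_v \max(0,\ell_v(u))=\tfrac12\sum_v |\ell_v(u)|,
\]
because the positive and negative parts have equal total mass. Consider the logarithmic map $L:\KK{K}^*\to\mathbb{R}^{r+1}$, $u\mapsto(\ell_v(u))_v$. Its image lies in the hyperplane $V=\{x:\sum x_v=0\}$, and $\log H(u)=\tfrac12\|L(u)\|_1$. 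By Dirichlet's unit theorem \cite[Theorem 100]{Hecke}, $\ker L$ is the finite group $\mu_K$ of roots of unity, of order $w_K$. Moreover $\Lambda=L(\KK{K}^*)$ is a full lattice in $V\cong\mathbb{R}^r$, with covolume a fixed multiple of the regulator $R_K$.

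\emph{Step 2: translate to lattice points.} Set $t=\log H$ and let $B=\{x\in V:\tfrac12\|x\|_1\le 1\}$, a compact convex polytope in $V$ with nonempty interior. Then
\[
\#\{u\in\KK{K}^*: H(u)\le H\}=w_K\cdot\#(\Lambda\cap tB).
\]
This uses that each fibre of $L$ has exactly $w_K$ elements.

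\emph{Step 3: count lattice points.} Apply the classical estimate for lattice points in dilates of a convex body, for instance Davenport's lemma, or the elementary argument below:
\[
\#(\Lambda\cap tB)=\frac{\mathrm{vol}_V(B)}{\mathrm{covol}(\Lambda)}\,t^r+O_K(t^{r-1}).
\]
The elementary argument goes as follows. Fix a fundamental parallelepiped $F$ of $\Lambda$ with diameter $d_F$. The translates $\lambda+F$ with $\lambda\in tB$ contain $(t-c)B$ and are contained in $(t+c)B$, where $c$ depends only on $d_F$ and on the inradius of $B$. Comparing volumes gives the main term, with an error of order $\mathrm{vol}((t+c)B)-\mathrm{vol}((t-c)B)=O(t^{r-1})$. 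This yields the lemma with $\gamma_K=w_K\,\mathrm{vol}_V(B)/\mathrm{covol}(\Lambda)$, which depends only on $K$. When $r=0$ the count is just $w_K$ for all $H\ge1$, consistent with the statement.

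\emph{Main obstacle.} The step requiring the most care is making the error term genuinely $O((\log H)^{r-1})$ uniformly, rather than merely $o(t^r)$. This is exactly where convexity of $B$ matters: it is a polytope, so the boundary strip of width $O(1)$ around $tB$ has volume $O(t^{r-1})$, and the sandwich argument above controls it. Everything else follows from the product formula and Dirichlet's theorem.
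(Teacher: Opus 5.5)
Your proof is correct. The paper gives no argument for this lemma; it simply quotes it from Lang (Theorem 5.2 there). Your proposal therefore supplies a self-contained proof of the cited result, along the standard route.

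Your steps all hold:
\begin{itemize}
\item Since $|u|_v=1$ at every finite place, the product formula gives $\log H(u)=\tfrac12\|L(u)\|_1$ on the log-unit lattice $\Lambda\subset V$.
\item The count is therefore $w_K\cdot\#\bigl(\Lambda\cap (\log H)B\bigr)$.
\item The parallelepiped sandwich $(t-c)B\subseteq\bigcup_{\lambda\in\Lambda\cap tB}(\lambda+F)\subseteq(t+c)B$ holds because $B$ is convex, symmetric and contains a ball about $0$ in $V$. It gives the main term $w_K\,\mathrm{vol}_V(B)\,t^r/\mathrm{covol}(\Lambda)$ with error $O(t^{r-1})$.
\end{itemize}

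One point to make explicit: the sandwich yields the error bound only for $t=\log H\ge c$, not for all $H>1$. For $r\ge 2$, as $H\to 1^{+}$ the count stays at least $w_K$ while $(\log H)^{r-1}\to 0$. The statement should therefore be read for $H$ bounded away from $1$. This is harmless for the paper, which only uses the consequence $\ll_K(\log H)^r$ with $H$ replaced by the large quantity $H^2 2^n e^{nrM}$.
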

\noindent
As an immediate consequence of Lemma (\ref{u3}), we have 
\begin{equation}\label{count unit}
    \#\{u \in \KK{K}^* : H(u)\leq H^22^ne^{nrM}\} \ll_K(\log H)^r.
\end{equation}
\noindent
So clearly, combining Corollary \ref{useful} and (\ref{I5}), we can write

\begin{equation}\label{I6}
    A \subseteq \bigcup_{j=1}^N (A \cap u_j D),
\end{equation}
where $u_j\in \KK{K}^*$ and $H(u_j)\leq H^22^ne^{nrM}$ for each $1 \leq j\leq N$ and using (\ref{count unit}) $N \ll_K(\log H)^r.$

\begin{proof}[Proof of Proposition \ref{I1}]
    First, write $A_j=A \cap u_j D.$ For any $\alpha u_j$ and $\beta u_j \in A_j$, we have 
    
    $$|\sigma_i(\alpha u_j)|\leq |\sigma_i(u_j)| |N(\alpha)|^\frac{1}{n}e^{rM}$$
    and 
    
    $$|\sigma_i(u_j)||N(\beta)|^\frac{1}{n}e^{(1-n)rM}\leq|\sigma_i(\beta u_j)|,$$
    for every $1\leq i \leq n.$\\
    
    \noindent
    Now, without loss of generality we can assume that $|N(\alpha)|\leq |N(\beta)|,$ which gives 

    $$|\sigma_i(\alpha u_j)|\leq e^{nrM}|\sigma_i(\beta u_j)|,$$
    for every $1\leq i \leq n.$

    \medskip
    \noindent
    Hence, by Lemma \ref{u1}

    $$H(\alpha u_j+\beta u_j) \leq C_{M,n,r} (H(\alpha u_j)+H(\beta u_j)),$$\\
    \noindent
    which implies that (\ref{I0}) holds for any two elements in $A_j$ for each $1\leq j \leq N$.

    \medskip
    \noindent
    Let us denote by $A_L$ the set of maximal cardinality among all the $A_j$'s. By (\ref{I6}) we have 
    $$|A|\ll_K(\log H)^r|A_L| ,$$
    \noindent
    which concludes the proof of Proposition \ref{I1}.

\end{proof}

\section{The Larger Sieve over $\KK{K}$}\label{Larger Sieve}

\begin{proof}[Proof of Theorem~\ref{L.S}]
Let 
$$\Delta = \prod_{\substack{a,b \in A' \\ a \neq b}} H(a - b),$$

\noindent
where $A'\subseteq A$  is the choice of the subset in Proposition \ref{I1}.

\noindent
Now, using Proposition \ref{I1}, we have the following upper bound
\begin{equation} \label{delta-upper}
    \Delta \leq (2C_K H)^{|A'| (|A'| - 1)},
\end{equation}
where $C_K$ is a constant only depending on the number field $K$.
Next, we bound the height from below using (\ref{height to norm}) as follows.  

$$ \Delta 
    =\prod_{\substack{a,b \in A' \\ a \neq b}} H(a - b)\geq\prod_{\substack{a,b \in A' \\ a \neq b}} 
        \prod_{\substack{\mathfrak{p} \in P \\ v_{\mathfrak{p}}(a-b) > 0}} 
        (N\mathfrak{p})^{v_{\mathfrak{p}}(a-b)}.$$
It follows that
\begin{align*}
    \log \Delta 
    &\geq 
    \sum_{\substack{a,b \in A' \\ a \neq b}} 
    \sum_{\substack{\mathfrak{p} \in P \\ v_{\mathfrak{p}}(a-b) > 0}} 
    \log N\mathfrak{p} \\
    &= 
    \sum_{\substack{a,b \in A' \\ a \neq b}} 
    \sum_{\substack{\mathfrak{p} \in P \\ a \equiv b \, (\mathrm{mod}\, \mathfrak{p})}} 
    \log N\mathfrak{p}.
\end{align*}

\noindent
For each $\mathfrak{p} \in P$ and $\mathfrak{u} \in A \,\,\, \mathrm{mod}\,\,\mathfrak{p} $, define
\[
    A'_{\mathfrak{p}}(\mathfrak{u}) = 
    \big| \{ a \in A' : a \equiv \mathfrak{u} \, (\mathrm{mod}\, \mathfrak{p}) \} \big|.
\]
Then
\begin{align*}
    \log \Delta 
    &\geq 
    \sum_{\mathfrak{p} \in P} 
    (\log N\mathfrak{p}) 
    \sum_{\substack{a,b \in A' \\ a \equiv b \, (\mathrm{mod}\, \mathfrak{p})}} 1
    - |A'| \sum_{\mathfrak{p} \in P} \log N\mathfrak{p} \\
    &= 
    \sum_{\mathfrak{p} \in P} 
    (\log N\mathfrak{p}) 
    \sum_{\mathfrak{u}\in A \,\,\, (\mathrm{mod}\,\,\mathfrak{p})} A'_{\mathfrak{p}}(\mathfrak{u})^2
    - |A'| \sum_{\mathfrak{p} \in P} \log N\mathfrak{p}.
\end{align*}

\noindent
By the Cauchy–Schwarz inequality, we have
\[
    \sum_{\mathfrak{u}\in A \,\,\, (\mathrm{mod}\,\,\mathfrak{p})} 
    A'_{\mathfrak{p}}(\mathfrak{u})^2 
    \geq 
    \frac{\big(\sum_{\mathfrak{u}} A'_{\mathfrak{p}}(\mathfrak{u})\big)^2}{\nu(\mathfrak{p})}
    = \frac{|A'|^2}{\nu(\mathfrak{p})}.
\]
Hence,
\[
    \log \Delta 
    \geq 
    \sum_{\mathfrak{p} \in P} 
    \Big( \frac{|A'|^2}{\nu(\mathfrak{p})} - |A'| \Big) \log N\mathfrak{p}.
\]

\noindent
Altogether with \eqref{delta-upper}, we obtain the desired inequality.
\[
    \sum_{\mathfrak{p} \in P} 
    \Big( \frac{|A'|^2}{\nu(\mathfrak{p})} - |A'| \Big) \log N\mathfrak{p}
    \leq 
    \log \Delta 
    \leq 
    |A'|(|A'| - 1) \log \big( 2C_K H \big).
\]

\end{proof}

\begin{proof}[Proof of Corollary~\ref{cor}]
Let $P$ be the set of prime ideals $\mathfrak{p}$ such that $N\mathfrak{p}\leq\frac{|A|}{(\log H)^r}$. Now, applying Theorem \ref{L.S} with this choice of $P$, we have 
$$ \frac{|A|}{(\log H)^r}\ll_{K,\alpha,c}H^{\alpha}.$$
\\
\noindent
So clearly, $|A|\ll_{K,\alpha}H^{\alpha}(\log H)^r,$ gives the required upper bound for $|A|.$
\end{proof}

\begin{remark}
    Note that, instead of working with the multiplicative height, if we use the height normalized by the degree of the number field (for such a height function, the height of an algebraic number does not depend on the choice of the number field), i.e. 
    $$\overline{H}(\alpha)={H(\alpha)}^{\frac{1}{[K:\mathbb{Q}]}} = {\prod_{v \in M_K} \max \bigl(1, |\alpha|_v^{n_v} \bigr)}^{\frac{1}{[K:\mathbb{Q}]}},$$
    \noindent
    we still have Theorem \ref{I1} and as a consequence Theorem \ref{L.S} and Corollary \ref{cor} follows.
\end{remark}

\section{Two-dimensional larger Sieve over $\KK{K}$}\label{2-D Larger}

In this section, our main aim is to prove a two-dimensional larger sieve of Helfgott and Venkatesh type over $\KK{K}$. One of the main ingredients of proving this two-dimensional larger sieve is a two-dimensional analogue of the improved height inequality (Proposition \ref{I1}).

\medskip
\noindent
Recall that, $\mathcal{W}\subseteq\{x^ly^m:0\leq l\leq L \,\,\,\text{and}\,\,\, 0\leq m\leq M\}$, where $M$ and $L$ are given.

\[|\mathcal{W}|=w, \,\,\,\,\,
d_{\mathcal{W}} = \sum_{f \in \mathcal{W}} \deg(f).
\]

\noindent
Let us also denote by $d_{\mathcal{W}x}$ and $d_{\mathcal{W}y}$ the total degrees of $x$ and $y$, respectively, among the monomials $f=x^ly^m\in\mathcal{W}$, where
\[
d_{\mathcal{W}x}=\sum_{x^ly^m\in\mathcal{W}}l,
\qquad
d_{\mathcal{W}y}=\sum_{x^ly^m\in\mathcal{W}}m.
\]
We write $f_1,f_2,...,f_w$ for the elements of $\mathcal{W}$ and by a $\mathcal{W}$-curve we mean an affine algebraic curve described by a single equation $g(x,y)=0$, where $g$ belongs to the linear span of $\mathcal{W}$ over $K$.

\noindent
First observe that for every $\alpha$ and $\beta \in D$ (\ref{Domain}), if we consider the quantity $\gamma=\prod_{f_j \in \mathcal{W'}} f_j(\alpha,\beta)$, where the product is over an arbitrary collection $\mathcal{W'}\subseteq\mathcal{W}$ of monomials, then
\begin{equation}\label{gen I}
    |N(\gamma)|^\frac{1}{n}\ll_{K,\mathcal{W}}|\sigma_i(\gamma)|\ll_{K,\mathcal{W}} |N(\gamma)|^\frac{1}{n} \text{ for each  }1\leq i\leq n.
\end{equation}

\noindent
The following lemma gives a simple generalization of Lemma \ref{u1}.

\begin{lemma}\label{F_2}
    Let $\alpha_t=\lambda \gamma_t$ where $\gamma_t$ satisfies (\ref{gen I}) for every $1\leq t \leq T$ and $\lambda \in K^*$. Then
    $$H(\sum_{t=1}^{T}\alpha_t)\ll_{K,\mathcal{W},T}\sum_{k=1}^{T}H(\alpha_t).$$ 
    \noindent
    
\end{lemma}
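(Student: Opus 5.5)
The plan is to separate the height into its archimedean and non-archimedean parts. Write $S=\sum_{t=1}^T\alpha_t=\lambda\sum_t\gamma_t$. I will bound each part of $H(S)$ by the corresponding part of a single $H(\alpha_{t^*})$, for a well-chosen index $t^*$. For $x\in K^*$ write
\[
H(x)=H_{\mathrm{fin}}(x)\,H_\infty(x),\qquad H_{\mathrm{fin}}(x)=\prod_{v\nmid\infty}\max(1,|x|_v^{n_v}),\qquad H_\infty(x)=\prod_{\sigma}\max(1,|\sigma(x)|^{n_\sigma}).
\]
Here $H_{\mathrm{fin}}(x)=N(\mathfrak d_x)$ is the norm of the denominator ideal of $x$. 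The $\gamma_t$ are algebraic integers, since they are products of monomials evaluated at points of $D$, as in (\ref{gen I}).

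\emph{Archimedean step.} This is where (\ref{gen I}) is used, exactly as in the proof of Proposition \ref{I1}. Choose $t^*$ with $|N(\gamma_{t^*})|$ maximal. By (\ref{gen I}), for every $t$ and every $i$,
\[
|\sigma_i(\gamma_t)|\ll_{K,\mathcal W}|N(\gamma_t)|^{1/n}\le|N(\gamma_{t^*})|^{1/n}\ll_{K,\mathcal W}|\sigma_i(\gamma_{t^*})|.
\]
Multiplying by $|\sigma_i(\lambda)|$ gives $|\sigma_i(\alpha_t)|\ll|\sigma_i(\alpha_{t^*})|$ for all $i$. So we are in the situation of Lemma \ref{u1} with constants $c_i$ depending only on $K,\mathcal W$. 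Its first chain of inequalities, iterated over the $T$ summands, gives
\[
H_\infty(S)\ll_{K,\mathcal W,T}H_\infty(\alpha_{t^*}).
\]
This step uses nothing about $\lambda$ beyond $\lambda\ne0$, because $\lambda$ multiplies every embedding uniformly.

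\emph{Non-archimedean step.} By the ultrametric inequality, $|S|_v\le\max_t|\alpha_t|_v$ at each finite $v$. In ideal language, $\mathfrak d_S$ divides $\operatorname{lcm}_t\mathfrak d_{\alpha_t}$. Writing $\mathfrak d=\mathfrak d_\lambda$ and $\mathfrak g_t=\gcd(\mathfrak d,\gamma_t\mathcal O_K)$, we have $\mathfrak d_{\alpha_t}=\mathfrak d\mathfrak g_t^{-1}$, so
\[
\operatorname{lcm}_t\mathfrak d_{\alpha_t}=\mathfrak d\,\gcd_t(\mathfrak g_t)^{-1}.
\]
The goal is $N(\mathfrak d_S)\ll N(\mathfrak d_{\alpha_{t^*}})$, or failing that $N(\mathfrak d_S)\ll N(\mathfrak d_{\alpha_t})$ for some $t$ whose archimedean part also dominates. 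If that holds, then $H(S)\ll H(\alpha_{t^*})\le\sum_tH(\alpha_t)$, which is the claim.

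\emph{Main obstacle.} The non-archimedean comparison is the real difficulty, and it is where a general $\lambda\in K^*$ (rather than an integral one) matters. Both steps above are easy separately. The danger is that the index maximizing the archimedean part and the index with the largest denominator are different. Already over $\mathbb Q$ one has $\lambda=1/(pq)$, $\gamma_1=p$, $\gamma_2=q$, giving $H(S)=pq$ while $H(\alpha_1)+H(\alpha_2)=p+q$.

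I would first try to use the common factor $\lambda$ to control $\gcd_t\mathfrak g_t$ against $\mathfrak g_{t^*}$. The aim is an inequality of the form $N(\mathfrak g_{t^*})\ll N(\gcd_t\mathfrak g_t)$, which is what is needed because then $N(\mathfrak d_S)\le N(\mathfrak d)/N(\gcd_t\mathfrak g_t)\ll N(\mathfrak d)/N(\mathfrak g_{t^*})=N(\mathfrak d_{\alpha_{t^*}})$. Note that $\gcd_t\mathfrak g_t$ divides $\mathfrak g_{t^*}$, so the reverse inequality $N(\gcd_t\mathfrak g_t)\le N(\mathfrak g_{t^*})$ is automatic and gives nothing. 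This comparison must therefore be checked against the specific way $\lambda$ and the $\gamma_t$ arise in the application, where $\lambda$ is a common factor pulled out of the determinant terms and the $\gamma_t$ are monomials in points of one translate $u_jD$. It is this structure, not the bare hypothesis $\lambda\in K^*$, that has to be exploited.
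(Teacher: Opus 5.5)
Your archimedean step is exactly the paper's proof. You order the summands by $|N(\gamma_t)|$, use (\ref{gen I}) to get $|\sigma_i(\alpha_t)|\ll_{K,\mathcal W}|\sigma_i(\alpha_{t^*})|$ at every embedding, and run the chain of inequalities from Lemma \ref{u1}.

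Your counterexample is also correct, and it works in every $K$ of degree $n$. Take distinct rational primes $\gamma_1=p$, $\gamma_2=q$, which satisfy (\ref{gen I}) with constant $1$ and even lie in $D$, and take $\lambda=1/(pq)$. Then $H(\alpha_1+\alpha_2)=(pq)^n$, while $H(\alpha_1)+H(\alpha_2)=p^n+q^n$. So the lemma with arbitrary $\lambda\in K^*$ is false. The paper does not notice this because its proof relies on the first inequality of Lemma \ref{u1}, $H(x)\le\prod_i(|\sigma_i(x)|+1)$. That inequality is valid only for $x\in\mathcal O_K$; in your notation it silently drops $H_{\mathrm{fin}}(S)$.

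The gap in your proposal is that it stops at the non-archimedean step without closing it, and the route you suggest cannot work. No bound $N(\mathfrak g_{t^*})\ll N(\gcd_t\mathfrak g_t)$ follows from the hypotheses, as your own example shows. What you are missing is that the \emph{specific structure} of the application is simply that $\lambda$ is a unit. In Proposition \ref{H2}, every term of the determinant is $\pm\prod_k f_k(P_{\pi(k)})$ with $P_k=(u_ia_k,u_jb_k)$ and $a_k,b_k\in D\subseteq\mathcal O_K$. So the common factor is $\lambda=u_i^{d_{\mathcal W x}}u_j^{d_{\mathcal W y}}\in\mathcal O_K^*$.

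Hence all $\alpha_t$ and $S=\sum_t\alpha_t$ lie in $\mathcal O_K$, and $H_{\mathrm{fin}}(S)=1$. Your archimedean bound then gives $H(S)=H_\infty(S)\ll_{K,\mathcal W,T}H_\infty(\alpha_{t^*})\le H(\alpha_{t^*})$. The same argument works verbatim for any nonzero $\lambda\in\mathcal O_K$.

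So the right conclusion is to restate the lemma with $\lambda\in\mathcal O_K\setminus\{0\}$, which is all Proposition \ref{H2} uses. With that hypothesis your proof is complete and agrees with the paper's.
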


\begin{proof}
    Without loss of generality, we can assume that $N(\alpha_1)\leq N(\alpha_2)\leq...\leq N(\alpha_T),$ which essentially translates to saying 
    $$|\sigma_i(\alpha_1)|\ll_{K,\mathcal{W}}|\sigma_i(\alpha_2)|\ll_{K,\mathcal{W}}\,\,...\ll_{K,\mathcal{W}}|\sigma_i(\alpha_T)|, \text{ for each  } 1\leq i\leq n.$$
    \noindent
    Now  similarly as in the proof of Lemma \ref{u1}, we have 
    $$H(\sum_{t=1}^{T}\alpha_t)\ll_{K,\mathcal{W},T}\prod_{\sigma_i:K\to \mathbb{C}}\left(|\sigma_i(\alpha_T)|+1\right)\ll_{K,\mathcal{W},T}H(\alpha_T).$$
    which concludes the proof. \noindent
As mentioned in the Remark \ref{remark}, here also we can apply the generalization of the Chebyshev's inequality \cite[Theorem 2, p. 37]{Inequality} to obtain sharper constants in the inequality.
\end{proof}

\noindent
Next we have a two-dimensional analogue of Proposition \ref{I1}.

\begin{proposition}\label{H2}
    Let $B\subseteq\{(\alpha,\beta )\in \KK{K}\times \KK{K}\ : H(\alpha), H(\beta)\leq H\}$. There exists  $B'\subseteq B$ with $|B'|\gg_K \frac{|B|}{(\log H)^{2r}}$ such that for every $P_1, P_2,...,P_w \in B'$, we have 
    $$H(\text{det}((f_i(P_j))_{1\leq i,j\leq w}))\ll_{K,\mathcal{W}}H^{d_{\mathcal{W}}}.$$
\end{proposition}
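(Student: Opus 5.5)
We mimic the proof of Proposition \ref{I1}, applying the unit-decomposition (\ref{I6}) separately to each coordinate. By Corollary \ref{useful}, (\ref{I5}) and (\ref{count unit}), every $\alpha$ with $H(\alpha)\le H$ lies in some $u_jD$ with $H(u_j)\le H^22^ne^{nrM}$, and there are $N\ll_K(\log H)^r$ such units $u_j$. Partition $B$ according to the pair $(u_j,u_k)$ with $\alpha\in u_jD$ and $\beta\in u_kD$. This gives at most $N^2\ll_K(\log H)^{2r}$ classes $B_{j,k}=B\cap(u_jD\times u_kD)$. We take $B'$ to be a class of maximal size, so that $|B'|\gg_K|B|/(\log H)^{2r}$. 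Fix this $B'$, and write each point as $P=(u\alpha_0,v\beta_0)$ with $\alpha_0,\beta_0\in D$ and fixed units $u=u_j$, $v=u_k$.

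Next expand the determinant by the Leibniz formula:
$$\det(f_i(P_j))=\sum_{\pi\in S_w}\operatorname{sgn}(\pi)\prod_{i=1}^w f_i(P_{\pi(i)}).$$
Since each $f_i=x^{l_i}y^{m_i}$ is a monomial, every term factors as
$$\prod_i f_i(P_{\pi(i)})=\lambda\,\gamma_\pi,\qquad \lambda=u^{d_{\mathcal{W}x}}v^{d_{\mathcal{W}y}},\qquad \gamma_\pi=\prod_i f_i(\alpha_{0,\pi(i)},\beta_{0,\pi(i)}).$$
The unit factor $\lambda$ is the same for all $\pi$. Each $\gamma_\pi$ is a product of $d_{\mathcal{W}}$ elements of $D$. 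Since elements of $D$ have all conjugates comparable to $|N|^{1/n}$ up to constants depending on $K$, the same holds for $\gamma_\pi$, with constants depending on $K$ and $\mathcal{W}$. This is exactly the analogue of (\ref{gen I}); it is slightly more general because points $P_j$ with different indices appear, but the argument is the same. Lemma \ref{F_2}, with $T=w!$ and this common $\lambda$, then gives
$$H(\det)\ll_{K,\mathcal{W}}\sum_\pi H\Bigl(\prod_i f_i(P_{\pi(i)})\Bigr).$$

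Finally, bound each term by the multiplicativity in (\ref{I2}):
$$H\Bigl(\prod_i f_i(P_{\pi(i)})\Bigr)\le\prod_i H(x_{\pi(i)})^{l_i}H(y_{\pi(i)})^{m_i}\le H^{\sum_i(l_i+m_i)}=H^{d_{\mathcal{W}}}.$$
Summing over the $w!$ permutations gives $H(\det)\ll_{K,\mathcal{W}}H^{d_{\mathcal{W}}}$.

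The main point to check carefully is the application of Lemma \ref{F_2}. It needs every summand to be $\lambda$ times an element whose conjugates are all balanced, and this is precisely why we restrict to a single class $u_jD\times u_kD$: the unbalanced unit part must factor out as a common $\lambda$. Inside Lemma \ref{F_2}, the balancing of $\gamma_\pi$ is what allows the terms to be ordered embedding-wise by their norms, so the sum is controlled by its largest term rather than by a product of heights. We also need to confirm that the implied constants depend only on $K$ and $\mathcal{W}$, since each $\gamma_\pi$ involves at most $d_{\mathcal{W}}$ factors from $D$.
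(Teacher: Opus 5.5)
Your proposal is correct and follows the paper's proof essentially step for step. You split $B$ into the $\ll_K(\log H)^{2r}$ classes $B\cap(u_jD\times u_kD)$, keep the largest, expand the determinant by the Leibniz formula with the common unit factor $u^{d_{\mathcal{W}x}}v^{d_{\mathcal{W}y}}$, and conclude with Lemma \ref{F_2} and multiplicativity of $H$; you also make explicit two points the paper leaves implicit, namely that (\ref{gen I}) is needed for products mixing different points $P_j$ (which holds by the same argument) and the termwise bound $H\bigl(\prod_i f_i(P_{\pi(i)})\bigr)\le H^{d_{\mathcal{W}}}$.
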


\begin{proof}

For every $(x,y)\in \KK{K}\times \KK{K}$ and $(u,v)\in \KK{K}^*\times \KK{K}^*$, let us denote the point-wise multiplication by $(u,v)\cdot(x,y)=(ux,vy)$. As an immediate consequence of Lemma \ref{u2}, \ref{u3} and (\ref{I5}), similar to (\ref{I6}), we have
\begin{equation}\label{2-S}
    B \subseteq \bigcup_{1\leq i,j\leq N} (B \cap (u_i,u_j)\cdot (D\times D)),
\end{equation}
where, $u_i,u_j\in \KK{K}^*$ and $N \ll_K(\log H)^r$.\\

\noindent
For each $1\leq i,j\leq N$, let us denote $B \cap (u_i,u_j)\cdot (D\times D)$ by $B_{i,j} \subseteq B.$\\
\noindent
Observe that, for any $P_1, P_2,...,P_w \in B_{i,j},$ we can write 
$$\text{det}((f_i(P_j))_{1\leq i,j\leq w})=\sum_{\pi \in S_w} \operatorname{sgn}(\pi) f_1(P_{\pi(1)}) \cdots f_w(P_{\pi(w)})= \sum_{t}u_i^{d_{\mathcal{W}x}}v_j^{d_{\mathcal{W}y}}\gamma_t, $$

\noindent
where $\gamma_t$ satisfies (\ref{gen I}) for each $t$ and $H(u_i^{d_{\mathcal{W}x}}v_j^{d_{\mathcal{W}y}}\gamma_t)\ll_K H^{d_\mathcal{W}}.$\\

\noindent
Now, using Lemma \ref{F_2} we have $H(\text{det}((f_i(P_j))_{1\leq i,j\leq w}))\ll_{K,\mathcal{W}}H^{d_{\mathcal{W}}},$ for any $P_1, P_2,...,P_w \in B_{i,j}$. Let us assume that $B'$ is the set of maximal cardinality among all the $B_{i,j}$'s, then clearly 
$$|B|\ll_K(\log H)^{2r}|B'|.$$

\end{proof}
\noindent
We now proceed to the proof of Theorem \ref{2-D Larger Sieve}.

\begin{proof}[Proof of Theorem~\ref{2-D Larger Sieve}]

Let us assume that $(a)$ does not occur, in other words, every $\mathcal{W}$-curve contains at most $\delta|S'|$ points of $S'$ (with the choice of $S'$ coming from Proposition \ref{H2}).\\
\noindent
Write $\boldsymbol{\mathbf{P}}$ as shorthand for a
$w$-tuple $(P_1,\dots,P_w)$ of points in $S'$ and define 
$$W(\boldsymbol{\mathbf{P}})=W(P_1,...,P_w)=\text{det}((f_i(P_j))_{1\leq i,j\leq w}).$$
Note that, if $P_{j_1}=P_{j_2} \mod{\mathfrak{p}}$, then
$$f_i(P_{j_1})=f_i(P_{j_2}) \mod{\mathfrak{p}}, \,\,\,\text{for each }i.$$
As a consequence, for every prime ideal $\mathfrak{p}$, if the number of distinct points among the set $(P_1,...,P_w) \mod{\mathfrak{p}}$ is at most $k$, then the prime ideal factorization of $W(P_1,...,P_w)$ must contain $\mathfrak{p}^{w-k}$. We will use this property later.\\
\noindent
Let us now consider the following product over all tuples $\boldsymbol{\mathbf{P}} \in (S')^w$ with $W(\boldsymbol{\mathbf{P}})\neq 0$, which we shall call \emph{admissible}. Suppose,
\begin{equation}\label{eq:3.1}
\Delta \;=\; \prod\nolimits_{\boldsymbol{\mathbf{P}}}^{*}
 W(\boldsymbol{\mathbf{P}}).
\end{equation}

\noindent
Our aim is to estimate the quantity $H(\Delta)$ from above and below.\\

\noindent
Observe that, Proposition \ref{H2} immediately gives the following upper bound for all $\boldsymbol{\mathbf{P}} \in (S')^w$,

\[
H( W(\boldsymbol{\mathbf{P}}))\ll_{K,\mathcal{W}}H^{d_{\mathcal{W}}}.
\]

\noindent
Using (\ref{I2}) it follows that $H(\Delta)\leq {H( W(\boldsymbol{\mathbf{P}}))}^{|S'|^w}.$ Taking logarithm on both sides we get

\begin{equation}\label{eq:3.2}
\frac{\log H(\Delta)}{|S'|^{w}}
\le d_{\mathcal{W}}\log H + O_{K,\mathcal{W}}(1).
\end{equation}

\noindent
Next, we bound $H(\Delta)$ from below by a product of local terms.\\

\noindent
Fix a prime ideal $\mathfrak{p}$, with $N\mathfrak{p} \leq Q$, where $Q$ is a quantity that we set later for the purpose of our estimation.
For each $x\in (\KK{K}/\mathfrak{p})^{2}$, let $\rho_x$ be the fraction
of points in $S'$ that reduce to $x \bmod \mathfrak{p}$.
For each $\boldsymbol{\mathbf{P}}$, let
\[
\kappa(\boldsymbol{\mathbf{P}})\in \{0,1,\dots,w-1\}
\]
be such that $w-\kappa(\boldsymbol{\mathbf{P}})$ is the number of
distinct points among the $P_i \bmod \mathfrak{p}$.
Hence, we have the following bound (using the divisibility property of $W(\boldsymbol{\mathbf{P}})$ by $\mathfrak{p}^{w-k}$ as stated before)  for the $\mathfrak{p}$-valuation of $\Delta$ denoted by
$\operatorname{ord}_{\mathfrak{p}} \Delta$,
\begin{equation}\label{eq:3.3}
\operatorname{ord}_{\mathfrak{p}} \Delta
\;\ge\;
\sum\nolimits_{\boldsymbol{\mathbf{P}}}^{*}
\kappa(\boldsymbol{\mathbf{P}}),
\end{equation}
where the sum is only over all admissible
$\boldsymbol{\mathbf{P}}$.

\noindent
Now, using a probabilistic argument similar to  \cite[$3.4$]{H-V} we can write the sum 
$\sum_{\boldsymbol{\mathbf{P}}} \kappa(\boldsymbol{\mathbf{P}})$ taken
over all $\boldsymbol{\mathbf{P}}\in (S')^{w}$, admissible or not as the following.

\begin{equation}\label{eq:3.4}
\frac{1}{|S'|^{w}}
\sum_{\boldsymbol{\mathbf{P}}}
\kappa(\boldsymbol{\mathbf{P}})
=
w - \sum_{x\in (\KK{K}/\mathfrak{p})^{2}}
\bigl(1-(1-\rho_x)^{w}\bigr)
=
\sum_{x\in (\KK{K}/\mathfrak{p})^{2}}
\bigl((1-\rho_x)^{w} + w\rho_x -1\bigr).
\end{equation}

\noindent
It remains to estimate the sum of $\kappa(\boldsymbol{\mathbf{P}})$ over all non-admissible $\boldsymbol{\mathbf{P}}$.\\
\noindent
Consider the collection of all non-admissible tuples
$\boldsymbol{\mathbf{P}}$ for which $\kappa(\boldsymbol{\mathbf{P}})>0$.
For any such $\boldsymbol{\mathbf{P}}$, at least one of the following
situations must occur:

(1) There exist $(i,j)$ such that $P_i=P_j$;

(2) There exist $(i,j)$ such that
$P_i\equiv P_j \pmod{\mathfrak{p}}$ but $P_i\neq P_j$.\\
\noindent
Clearly, the number of tuples $\boldsymbol{\mathbf{P}}$ satisfying condition (1)
is bounded by $O_w(|S'|^{w-1})$.
To estimate the number of non-admissible tuples satisfying condition (2),
we permute the coordinates of $\boldsymbol{\mathbf{P}}$ so that $i=1$
and $j=2$, and reorder the elements of $W$ so that $w_1=1$ and
$w_2(P_1)\neq w_2(P_2)$.
(The first permutation introduces a factor of $w(w-1)/2$, which is absorbed
into the implied constant.)\\
\noindent
Now, observe that $\det(w_i(P_j))_{1\le i,j\le \ell}\neq 0$ for
$\ell=2$; let $\ell$ be maximal with this property.
Then $P_{\ell+1}$ lies on the $\mathcal{W}$-curve determined by
$P_1,P_2,\dots,P_\ell$ and by our assumption, such a curve contains at most $\delta|S'|$ points of $S'$. Hence there are at most $\delta|S'|$ possible values for $P_{\ell+1}$.\\
\noindent
It follows that the number of non-admissible tuples
$\boldsymbol{\mathbf{P}}$ satisfying condition (2) is
\[
O_w\!\left(\delta |S'|^{w-2}\Lambda\right),
\]
where $\Lambda$ denotes the number of pairs $(P,Q)\in (S')^2$ that reduce to
the same residue class modulo $\mathfrak{p}$, which can be expressed as the following by the definition of $\rho_x$.
\[
\Lambda = |S'|^2 \sum_x \rho_x^2.
\]

\noindent
Altogether, the total number of
non-admissible tuples $\boldsymbol{\mathbf{P}}$ with
$\kappa(\boldsymbol{\mathbf{P}})>0$ is at most
\[
|S'|^w \cdot O_w\!\left(|S'|^{-1}+\delta\sum_x \rho_x^2\right).
\]

\noindent
Using \eqref{eq:3.3} and \eqref{eq:3.4}, we have
\begin{equation}\label{eq:3.5}
\frac{\operatorname{ord}_{\mathfrak{p}} \Delta}{|S'|^w}
\ge
\sum_x\bigl((1-\rho_x)^w + w\rho_x -1\bigr)
-
O_w\!\left(\delta\sum_x \rho_x^2 + |S'|^{-1}\right).
\end{equation}

\noindent
Now, we the use the assumption that, for $N\mathfrak{p}>c$, the set $S'$ occupies at most $\tau |\KK{K}/\mathfrak{p}|=\tau N\mathfrak{p}$ residue classes modulo $\mathfrak{p}$ to produce a lower bound for the right side of (\ref{eq:3.5}).
The lower bound is obtained by considering the following two cases.\\

\noindent
\emph{Case 1.}
Suppose that for every $x\in(\KK{K}/\mathfrak{p})^{2}$ one has
\[
\rho_x < \frac{\delta}{w}.
\]
Then, for each $x$, we have the inequality
\[
(1-\rho_x)^w + w\rho_x -1
\;\ge\;
\biggl(\binom{w}{2}-O_w(\delta)\biggr)\rho_x^2.
\]
By Cauchy--Schwarz,
\[
\sum_x \rho_x^2
\;\ge\;
\frac{1}{\tau N\mathfrak{p}}\Bigl(\sum_x \rho_x\Bigr)^2
=
\frac{1}{\tau N\mathfrak{p}}.
\]
Substituting this into \eqref{eq:3.5}, we obtain
\begin{equation}\label{eq:3.6}
\frac{\operatorname{ord}_{\mathfrak{p}} \Delta}{|S'|^w}
\ge
\frac{1}{\tau N\mathfrak{p}}
\biggl(\binom{w}{2}-O_w(\delta)\biggr)
+
O_w(|S'|^{-1}).
\end{equation}

\medskip
\noindent
\emph{Case 2.}
Suppose instead that there exists some
$x\in(\KK{K}/\mathfrak{p})^{2}$ such that
\[
\rho_x \ge \frac{\delta}{w}.
\]
Since
\[
\frac{\partial}{\partial z}
\bigl((1-z)^w + wz -1\bigr)
=
w\bigl(1-(1-z)^{w-1}\bigr)
\ge wz,
\]
it follows that
\[
(1-\rho_x)^w + w\rho_x -1
\;\ge\;
\frac{1}{2}w\rho_x^2
\;\ge\;
\frac{1}{2}w\left(\frac{\delta}{w}\right)^2.
\]
Moreover, for $x'\neq x$ we have
$(1-\rho_{x'})^w + w\rho_{x'} -1 \ge 0$,
and clearly $\sum_x \rho_x^2 \le 1$.
Substituting into \eqref{eq:3.5} yields
\begin{equation}\label{eq:3.7}
\frac{\operatorname{ord}_{\mathfrak{p}} \Delta}{|S'|^w}
\ge
\frac{\delta^2}{2w}
-
O_w(\delta+|S'|^{-1}).
\end{equation}

\medskip
\noindent
Now, for prime ideals with  $N\mathfrak{p}$ exceeding a constant $c_{w,\delta}$ depending only on
$w$ and $\delta$, the bound \eqref{eq:3.7} implies \eqref{eq:3.6}.
(Here the implied constants in \eqref{eq:3.6} and \eqref{eq:3.7} need not
coincide.)
Henceforth, we shall work exclusively with the bound \eqref{eq:3.6}.

\noindent
Multiplying both sides of \eqref{eq:3.6} by $\log N\mathfrak{p}$ and summing over all prime ideal $\mathfrak{p}$ with $\max(c,c_{w,\delta})<N\mathfrak{p}\le Q$, we obtain
\[
\frac{w(w-1)}{2\tau}
\bigl(\log Q-\log c_{c,w,\delta}\bigr)
+
O_{\alpha,w}(\delta)
+
O(Q|S'|^{-1})
\;\le\;
\frac{\log (\prod_{\mathfrak{p}}N\mathfrak{p}^{\operatorname{ord}_{\mathfrak{p}} \Delta})}{|S'|^w}\leq \frac{\log H(\Delta)}{|S'|^w},
\]
where $c_{c,w,\delta}$ depends only on $c$, $w$, and $\delta$.

\medskip
\noindent
On the other hand, by \eqref{eq:3.2} we have
\[
\frac{\log H(\Delta)}{|S'|^{w}}
\le d_{\mathcal{W}}\log H + O_{K,\mathcal{W}}(1).
\]
Taking $Q=|S'|$, we finally arrive at
\[
\frac{w(w-1)}{2\tau}
\bigl(\log|S'|-\log c_{c,w,\delta}\bigr)
+
O_{\tau,w}(\delta)
\;\le\;
 d_{\mathcal{W}}\log H + O_{K,\mathcal{W}}(1).
\]

\noindent
Hence, 

$$\frac{|S|}{(\log H)^{2r}}\ll_{K}|S'| \;\ll_{c,\delta,w,\mathcal{W}}\;
H^{\frac{2d_{\mathcal{W}}\,\tau}{w(w-1)}+O_{\tau,\mathcal{W}}(\delta)}.
$$

$$\implies |S|\ll_{K,c,\delta,\mathcal{W}}H^{\frac{2d_{\mathcal{W}}\,\tau}{w(w-1)}+O_{\tau,\mathcal{W}}(\delta)+\epsilon}.$$

\medskip
\noindent
We can let $H$ to be sufficiently large so that $\epsilon$ is absorbed in the constant implied by $O_{\tau,\mathcal{W}}(\delta)$. This completes the proof.
    
\end{proof}

\section{Bombieri-Pila bound over number fields}\label{Bombieri-Pila}

In this section, we prove a Bombieri--Pila type bound over number fields by adapting the methods of \cite{H-V}.    Let $f(x,y) \in \mathcal{O}_K[x,y]$ is irreducible over $K$. We denote 
$$N(f,H)=\#\{(x,y)\in \mathcal{O}_K^2, H(x),H(y)\leq H : f(x,y)=0\} .$$

\noindent
Also, let us denote the maximum height of the coefficients of $f(x,y)$ by $||f||$. If $f$ is of degree $d$, we will show that $N(f,H)\ll_{K,d,\epsilon}H^{\frac{1}{d}+\epsilon}$. In order to do that, we will need Weil bound for curves over finite fields and a number field version of Heath-Brown's interpolation argument on curves. Let us denote by $\mathbb{F}_q$ the finite field with $q$ elements, here $q$ is a prime power. First we state the Weil bound for curves over finite feilds from \cite[p. 92]{Weil}.

\begin{theorem}[Weil bound]\label{Weil bound}
Let $g(x,y)\in \mathbb{F}_q[x,y]$ be an absolutely irreducible polynomial
of degree $d$. Then
\[
\#\{(x,y)\in \mathbb{F}_q^2 : g(x,y)=0\}
=
q + O_d(\sqrt{q}).
\]
\end{theorem}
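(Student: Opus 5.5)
This is the classical Hasse--Weil estimate, and in the paper it is simply quoted from \cite[p.~92]{Weil}. If I were to prove it, I would first reduce the affine count to a statement about a smooth projective curve, and then prove the Riemann hypothesis for that curve by the Stepanov--Bombieri method.

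\emph{Reduction.} Let $C_0=\{g=0\}\subseteq\mathbb{A}^2_{\mathbb{F}_q}$, let $\overline{C_0}\subseteq\mathbb{P}^2$ be its projective closure, and let $\widetilde C\to\overline{C_0}$ be the normalization. Since $g$ is absolutely irreducible, $\widetilde C$ is a smooth, projective, geometrically irreducible curve over $\mathbb{F}_q$. By Pl\"ucker's formula its genus satisfies $\mathfrak{g}\le (d-1)(d-2)/2$. The affine count and $\#\widetilde C(\mathbb{F}_q)$ differ by $O_d(1)$, for three reasons:
\begin{itemize}
\item there are at most $d$ points at infinity;
\item there are at most $\binom{d-1}{2}$ singular points;
\item each singular point has at most $d$ preimages in $\widetilde C$.
\end{itemize}
So it suffices to prove
\[
|\#\widetilde C(\mathbb{F}_q)-(q+1)|\le 2\mathfrak{g}\sqrt q .
\]

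\emph{Riemann hypothesis.} I would use the zeta function
\[
Z(\widetilde C,T)=\frac{P(T)}{(1-T)(1-qT)},\qquad P(T)=\prod_{i=1}^{2\mathfrak{g}}(1-\omega_iT),
\]
which is rational with $\deg P=2\mathfrak{g}$ by Riemann--Roch. With this notation,
\[
\#\widetilde C(\mathbb{F}_{q^m})=q^m+1-\sum_i\omega_i^m .
\]
It is then enough to show $|\omega_i|\le\sqrt q$, and by the standard argument on the poles of $\sum_m(\sum_i\omega_i^m)T^m$ it suffices to show
\[
\#\widetilde C(\mathbb{F}_{q^m})\le q^m+O_{\mathfrak{g}}(q^{m/2})
\]
for all $m$ in some arithmetic progression. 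In particular I may assume $q$ is an even power of $p$ and is large in terms of $\mathfrak{g}$.

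\emph{Stepanov--Bombieri upper bound.} This is the key step. Fix a rational point $P_0$ and consider the Riemann--Roch spaces $L(aP_0)$. Form auxiliary functions of the shape $\sum_j f_j^{p^\mu}\,(h_j\circ\mathrm{Frob})$, with $f_j\in L(AP_0)$ and $h_j\in L(BP_0)$. Choose $p^\mu\approx\sqrt q$ and $A,B$ of size about $\sqrt q$. The parameters should be arranged so that such a function vanishes at every rational point other than $P_0$ to order at least $p^\mu$, and so that it is nonzero; nonvanishing uses the independence of the maps $f\mapsto f^{p^\mu}$ together with a dimension count from Riemann--Roch. Comparing the number of zeros with the degree of the pole at $P_0$ then gives
\[
\#\widetilde C(\mathbb{F}_q)\le q+(2\mathfrak{g}+1)\sqrt q+1 .
\]

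\emph{Lower bound and conclusion.} The matching lower bound does not need a separate construction. Since $\sum_i\omega_i^m$ is controlled from both sides once we know upper bounds over all extensions, I would apply the upper bound to the twists, or equivalently use Galois covers as in Bombieri's argument. This yields $|\omega_i|\le\sqrt q$, and the statement follows.

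The main obstacle is the auxiliary-function step: tuning $\mu$, $A$ and $B$ so that the dimension count forces a nonzero function while the vanishing order at the rational points stays high enough. Everything else is routine bookkeeping with $O_d(1)$ constants.
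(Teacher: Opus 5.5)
The paper does not prove this statement; it quotes the bound from Weil. Weil's own proof works through intersection theory on $C\times C$ (equivalently, positivity on the Jacobian). You instead outline the Stepanov--Bombieri proof, which needs only Riemann--Roch and rationality of the zeta function. That is a legitimate and more elementary route. Your reduction is fine: normalization, $\mathfrak g\le\binom{d-1}{2}$, and an $O_d(1)$ discrepancy between the affine count and $\#\widetilde C(\mathbb{F}_q)$. Your auxiliary functions are also the standard ones, with one correction to how you describe them. The order-$p^\mu$ vanishing at rational points does not come from tuning $A,B,\mu$. It comes from imposing the linear condition $\sum_j f_j^{p^\mu}h_j=0$; then $\sum_j f_j^{p^\mu}h_j^q=\bigl(\sum_j f_jh_j^{q/p^\mu}\bigr)^{p^\mu}$ vanishes at each rational point $x\neq P_0$. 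Riemann--Roch makes this kernel nonzero. The condition $Ap^\mu<q$, together with distinct pole orders of the $h_j$, keeps the resulting function nonzero.

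There is a real error, though. You claim that the pole argument needs only an upper bound $\#\widetilde C(\mathbb{F}_{q^m})\le q^m+O(q^{m/2})$, and that upper bounds over all extensions control $\sum_i\omega_i^m$ from both sides. Neither is true. Such bounds only give $\sum_i\omega_i^m\ge -O(q^{m/2})$. A real inverse root $\omega>\sqrt q$, paired with $q/\omega$ by the functional equation, merely lowers every $\#\widetilde C(\mathbb{F}_{q^m})$, so these bounds cannot detect it.

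The lower bound is therefore a separate step, and it is where the twists you mention in passing do real work. After enlarging $q$, take the Galois closure $X\to\mathbb{P}^1$ of a separable map $\widetilde C\to\mathbb{P}^1$, with group $G$ and $\widetilde C=X/H$. The same Stepanov argument, with $\sigma\circ\mathrm{Frob}$ in place of $\mathrm{Frob}$, gives $N_\sigma:=\#\{P\in X:\mathrm{Frob}(P)=\sigma(P)\}\le q+O(\sqrt q)$ for every $\sigma\in G$. One also has
\[
\sum_{\sigma\in G}N_\sigma=|G|(q+1)+O(1),\qquad \#\widetilde C(\mathbb{F}_q)=\frac{1}{|H|}\sum_{\sigma\in H}N_\sigma+O(1).
\]
Combining these gives $N_\sigma\ge q-O(|G|\sqrt q)$, and hence $\#\widetilde C(\mathbb{F}_q)\ge q-O(|G|\sqrt q)$. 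Constants depending on $X$ are harmless here, since only $m\to\infty$ matters. Only this two-sided estimate, over all large even powers of $q$, feeds the pole argument and yields $|\omega_i|\le\sqrt q$.

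This step cannot be skipped, even though the paper uses only the upper half of the bound. $N\mathfrak{p}$ can be an odd power of $p$, for instance a prime, and there Bombieri's choice $p^\mu=\sqrt q$ is unavailable. So in your framework the upper bound at such $q$ is reached only through the Riemann hypothesis. With the Galois-closure step supplied, your outline is a complete proof sketch.
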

\medskip

\noindent
Next, we need the following number field version of Heath-Brown's result from \cite{Heath-Brown}.

\begin{lemma}\label{Heath-Brown}
Let $f(x,y) \in \mathcal{O}_K[x,y]$ be irreducible over $K$ of degree $d$. Then either
$$N(f,H) \le d^2,$$
\noindent
or
$$f(x,y) = \lambda\, g(x,y),$$

\medskip
\noindent
where $\lambda \in K^*$ and $g(x,y) \in \mathcal{O}_K[x,y]$ satisfies $\|g\| \le H^{O_d(1)}$.
\end{lemma}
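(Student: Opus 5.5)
We follow Heath-Brown's interpolation determinant argument, transplanted to $\KK{K}$. Suppose $N(f,H)>d^2$. The plan is to show that the coefficient vector of $f$ is proportional to the kernel of a linear system built from points of $S$. Cramer's rule then produces an integral multiple $g$ of $f$ whose coefficients are minors with controlled height.

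\textbf{Step 1 (linear algebra).} Let $\mathcal{M}=\{x^ly^m: l+m\le d\}$, so $|\mathcal{M}|=\binom{d+2}{2}$. For points $Q_1,\dots,Q_k\in S$, form the matrix $\Phi=(\mu(Q_j))_{j\le k,\,\mu\in\mathcal{M}}$. The coefficient vector of $f$ lies in $\ker\Phi$. We choose the points so that $\ker\Phi$ is one-dimensional over $K$.

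The argument for this choice runs as follows. Suppose every polynomial $h$ of degree $\le d$ vanishing on $S$ were a multiple of $f$. Since $f$ is irreducible, the only such $h$ of degree $\le d$ would be scalar multiples of $f$. We want to show $f$ is the unique (up to scalars) degree-$\le d$ polynomial vanishing on all of $S$. If some $h$ of degree $\le d$ not proportional to $f$ vanished on $S$, then Bézout, using irreducibility of $f$, would force $|S|\le d^2$, a contradiction. Hence the kernel of $\Phi$, taken over all points of $S$, is exactly $K\cdot f$. By finiteness of rank we can pick $k=|\mathcal{M}|-1$ points $Q_1,\dots,Q_k$ whose rows already have rank $|\mathcal{M}|-1$.

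\textbf{Step 2 (Cramer's rule).} Let $g_\mu=\pm\det\Phi^{(\mu)}$, where $\Phi^{(\mu)}$ is the $k\times k$ minor with column $\mu$ deleted. Then $g=\sum_\mu g_\mu\mu$ spans $\ker\Phi$, so $f=\lambda g$ for some $\lambda\in K^*$. Moreover $g\in\KK{K}[x,y]$, since its entries are polynomials in the coordinates of the $Q_j\in\KK{K}^2$.

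\textbf{Step 3 (height bound).} Each $g_\mu$ is a sum of at most $k!$ terms. Each term is a product of $k$ monomials of degree $\le d$ evaluated at points with $H(x),H(y)\le H$, so by (\ref{I2}) every term has height $\le H^{2dk}$. Applying the trivial inequality $H(\alpha+\beta)\le 2^{[K:\mathbb{Q}]}H(\alpha)H(\beta)$ iteratively over the $k!$ summands costs a factor of at most $(2^{n}H^{2dk})^{k!}$. This gives $\|g\|\le H^{O_d(1)}$, with the constant depending on $K$ through $2^{n}$, which is absorbed for $H\ge 2$.

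The multiplicative blow-up in Step 3 is harmless here because only a bound of the form $H^{O_d(1)}$ is required, so Proposition \ref{H2} is not needed.

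\textbf{Main obstacle.} The delicate point is the Bézout step in Step 1. Bézout requires the two curves $f=0$ and $h=0$ to have no common component. This holds because $f$ is irreducible over $K$ and $h$ is not a $K$-multiple of $f$: any common factor over $\overline{K}$ would, by Galois invariance of the gcd, give a common factor over $K$, hence $f\mid h$. So $f\mid h$ contradicts $\deg h\le d$ unless $h\in Kf$. Once this is checked, the remaining steps are routine.
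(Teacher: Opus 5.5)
Your proposal is correct and follows essentially the same route as the paper. Both arguments use the interpolation matrix of the $\binom{d+2}{2}$ monomials of degree $\le d$ evaluated at points of $S$, build a kernel vector from its maximal minors (with height $H^{O_d(1)}$ by the elementary height inequalities), and then use B\'ezout together with irreducibility of $f$ over $K$ to identify that vector with a multiple of $f$. Your ordering is slightly more careful than the paper's: you apply B\'ezout first to show the kernel is exactly $K\cdot f$, so the rank is exactly $M-1$ before Cramer's rule is used, whereas the paper takes $d^2+1$ points and uses this rank fact only implicitly.
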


\begin{proof}
First, let us fix the following notations
\[
M := \frac{(d+1)(d+2)}{2}
\qquad\text{and}\qquad
N := d^2+1.
\]
Suppose that $N(f,H)\geq d^2+1$, i.e.
\[
f(x,y)=0
\]
admits $N$ solutions 
\[
(x_1,y_1),...,(x_N,y_N) \in \KK{K}^2,
\]
each satisfying $H(x_i), H(y_i) \leq H$.

\medskip

\noindent
We now construct an $N\times M$ integral matrix $C$ as follows.
The $i$-th row of $C$ consists of the of all the $M$ monomials of total
degree $\leq d$ in the variables $(x,y)$, evaluated at the point
$(x_i,y_i)$.

\medskip

\noindent
Let $f^*\in\KK{K}^M$ denote the coefficient vector of $f(x,y)$ with
respect to the monomial basis consisting of all the $M$ monomials of total degree $\leq d$.
Since each $(x_i,y_i)$ is a zero of $f$, we have
\[
Cf^*=0.
\]
As $f^*\neq 0$, it follows that the rank of $C$ is at most $M-1$.

\medskip
\noindent
Consequently, the homogeneous system
\[
Cg=0
\]
admits a nonzero solution $g^*\in\KK{K}^M$ which can be constructed explicitly from the $(M-1)\times(M-1)$
subdeterminants of $C$. Now, as $H(x_i), H(y_i)\leq H $ for all $i$, it is clear that for each entry $g^*_i$ of the vector $g^*$, we have 
\[
H(g^*_i)\leq H^{O_{d,M}(1)}=H^{O_d(1)}.
\]

\noindent
Let $g(x,y)$ be the polynomial of degree $\leq d$ whose coefficient
vector is $g^*$. By our construction, $g(x,y)$ vanishes at each of the points
$(x_1,y_1),...,(x_N,y_N) \in \KK{K}^2$.
Hence $f(x,y)$ and $g(x,y)$ have at least $d^2+1$ common zeros, which contradicts Bézout's theorem unless $g(x,y)$ is a constant multiple of $f(x,y)$ (as $f(x,y)$ is irreducible over $K$).

\medskip
\noindent
In particular, we conclude that
\[
f(x,y)= \lambda g(x,y)
\]
for some $\lambda \in K^*$ and $||g|| \leq H^{O_d(1)}$.
    
\end{proof}

\noindent
Now, we proceed to the proof of the main result of this section.

\begin{proof}[Proof of Theorem~\ref{B-P}]
    We may assume that
\[
N(f,H)=|S|>(d+1)^2,
\]
since otherwise the desired bound is trivial.

\medskip

\noindent
Now using Lemma \ref{Heath-Brown}, it is sufficient to restrict to polynomials satisfying $\|f\| \le H^{O_d(1)}$.

\medskip
\noindent
If the degree of $f$ in the variable $x$ is strictly less than $d$, we
apply a linear change of variables in $(x,y)$ to make the degree in $x$
equal to $d$.
Otherwise, we proceed without modification.
We choose such a linear transformation as follows.

\noindent
Let the linear transformation that makes the degree in $x$ equal to $d$ be given by
\begin{equation}\label{T1}
    (x,y)\longmapsto (a_1x+a_2y,a_3x+a_4y),
\end{equation}
where
\[
\max(H(a_1),H(a_2),H(a_3),H(a_4))=O_d(1).
\]

\noindent
To control the height of the integral points on the transformed curve, we require
\[
H(a_1x+a_2y) \ll_{K,d} H \quad \text{and} \quad H(a_3x+a_4y) \ll_{K,d} H.
\]
We achieve this by multiplying by units. By Corollary \ref{useful}, for every $(x,y) \in S$ there exist $(u_x,u_y) \in \KK{K}^* \times \KK{K}^*$ such that
\[
H(a_1u_xx+a_2u_yy) \ll_{K,d} H \quad \text{and} \quad H(a_3u_xx+a_4u_yy) \ll_{K,d} H.
\]
Hence, the transformation
\begin{equation}\label{T}
(x,y)\longmapsto (a_1u_xx+a_2u_yy,a_3u_xx+a_4u_yy)
\end{equation}
preserves the height of $(x,y)$ up to a constant depending only on $d$ and $K$. Furthermore, since (\ref{T1}) makes the degree in $x$ equal to $d$, the transformation (\ref{T}) only changes the coefficient of $x^d$ by a factor of $u_x^d$. Therefore, (\ref{T}) provides the required choice of linear transformation.

\medskip

\noindent
Moreover, by (\ref{I5}) and Lemma \ref{u3}, there are at most $\ll_K (\log H)^{2r}$ distinct pairs $(u_x,u_y)$. Hence, there are at most $\ll_K (\log H)^{2r}$ distinct transformations of the form \eqref{T}.

\medskip

\noindent
By the pigeonhole principle, there exists a transformation of the form \eqref{T} such that for at least
\[
\gg_K \frac{|S|}{(\log H)^{2r}}
\]
elements of $S$, the coordinates of the corresponding points on the transformed curve have height $\ll_{K,d} H$. It therefore suffices to establish the required bound for the transformed curve. From now on, we refer to the transformed curve as $C$.

\medskip
\noindent
Let $\mathfrak{p}$ be any prime ideal, and denote by $\bar f\in\mathbb{F}_q[x,y]$ the
reduction of $f$ modulo $\mathfrak{p}$, where $q=|\KK{K}/\mathfrak{p}|=N\mathfrak{p}$.
Write the factorisation of $\bar f$ into $\mathbb{F}_q$-irreducible
polynomials as
\[
\bar f=\bar f_1\cdots \bar f_{e_{\mathfrak{p}}}.
\]
By the Weil bound (Theorem \ref{Weil bound}), each irreducible component satisfies
\[
\bigl|\{(x,y)\in\mathbb{F}_q^2 : \bar f_i(x,y)=0\}\bigr|
\le q+O_d(\sqrt{q})=N\mathfrak{p}+O_d(\sqrt{N\mathfrak{p}}).
\]

\noindent
Let $\mathcal{P}$ denote the set of prime ideals for which $\bar f$ is reducible.
Let, $\Delta_f \in \KK{K}$ be the discriminant of $f$, then every prime ideal for which $\bar f$ is reducible, should appear in the prime ideal decomposition of the principle ideal generated by $\Delta_f$, i.e. $\mathfrak{p}|(\Delta_f)$.
Consequently,
\[
\prod_{\mathfrak{p}\in\mathcal{P}} N\mathfrak{p} \le H(\Delta_f) \leq H^{O_d(1)}.
\]

\noindent
For each $\mathfrak{p}\in\mathcal{P}$, partition the points of $S$ according to the irreducible component of $\bar f=0$ to which they reduce modulo $\mathfrak{p}$.
Since $\bar f$ has at most $d$ irreducible factors, this procedure yields a
covering of $S$ by sets
\[
S_1,\dots,S_k,
\]
with
\[
k\le d|\mathcal{P}| \ll_{d,\varepsilon} H^{\varepsilon},
\]
such that each $S_j$ intersects at most $N\mathfrak{p}+O_d(\sqrt{N\mathfrak{p}})$ residue classes
modulo every prime ideal $\mathfrak{p}$ .
In particular, for any fixed $\varepsilon>0$ and all prime ideal $\mathfrak{p}$, with
$N\mathfrak{p}\ge O_\varepsilon(1)$, each $S_j$ occupies at most
\[
(1+\varepsilon/2)N\mathfrak{p}
\]
residue classes modulo $\mathfrak{p}$.
\noindent
Hence it is enough to prove the desired bound with $S$ replaced by one
of the sets $S_j$.
Fix such a set and relabel it as $S$.

\noindent
We now apply Proposition \ref{2-D Larger Sieve} to this set $S$, taking
\[
\tau = 1+\varepsilon/2,
\]
with $L=d-1$. Since the curve $C$ is irreducible and has degree $d$ in $x$, whereas every
$\mathcal{W}$-curve has degree at most $d-1$ in $x$, the two curves cannot have a
common irreducible component. Indeed, any common component of $C$ and a
$\mathcal{W}$-curve would necessarily be equal to $C$, which is impossible since
the $\mathcal{W}$-curve has smaller degree in $x$. Therefore, by Bézout's theorem, the intersection of $S$ with any $\mathcal{W}$-curve contains at
most $d(d-1)$ points.
Hence, alternative~(a) in Proposition~3.1 would imply
\[
|S|\ll_K\delta^{-1}d(d-1)(\log H)^{2r}.
\]

\noindent
Assume instead that alternative~(b) of Proposition~3.1 holds.
Then we obtain the bound
\[
|S|
\ll_{K,\varepsilon,d,\delta,M}
H^{\frac{(1+\varepsilon/2)(d-1+M)}{d(M+1)-1}+
O_{\varepsilon,d,M}(\delta)}
.
\]
Choosing $M$ sufficiently large, this simplifies to
\[
|S|
\ll_{K,\varepsilon,d,\delta}
H^{\frac{1}{d}+3\varepsilon/4+O_{d,\varepsilon}(\delta)}.\]

\noindent
Finally, we choose $\delta$ small enough so that the term
$O_{d,\varepsilon}(\delta)$ is bounded by $\varepsilon/4$, and the desired
estimate follows.
In the remaining case corresponding to alternative~(a), we have
\[
|S|\ll_{K,d,\varepsilon}(\log H)^{2r},
\]
which is also admissible.

\end{proof}

\section*{Acknowledgments}
\noindent
I thank my supervisor Dr. Bryce Kerr for numerous fruitful discussions during the entire duration of this project and for his constant support and encouragement. I would also like to acknowledge the support of the Commonwealth through an Australian Government Research Training Program Scholarship.

\end{document}